\documentclass[10pt]{article}
\usepackage[utf8]{inputenc}
\usepackage[dvips]{graphicx}
\usepackage{epsfig,color}
\usepackage[USenglish]{babel}
\usepackage{amsmath}
\usepackage{amsthm}
\usepackage{amssymb}
\usepackage{tikz}
\usepackage{hyperref}
\usepackage[parfill]{parskip}

\makeatletter
\def\thm@space@setup{%
  \thm@preskip=\parskip \thm@postskip=0pt
}
\makeatother

\topmargin=-1.5cm
\textheight=8.2in
\textwidth=5.5in
\oddsidemargin=1.4cm 
\evensidemargin=1.4cm

\newtheorem{defn}{Definition}[section]
\newtheorem{teo}[defn]{Theorem}
\newtheorem{lema}[defn]{Lemma}

\newtheorem{conj}{Conjecture}
\newtheorem*{conjj}{Lescure and Meyniel's Conjecture}

\theoremstyle{definition}
\newtheorem*{obs}{Observation}

\newcommand{\imm}{\preccurlyeq_i}

\begin{document}

\title{Complete graph immersions in dense graphs}
\author{Sylvia Vergara S. \\ Universidad de Chile}
\date{\today}
\maketitle

\begin{abstract}
In this article we consider the relationship between vertex coloring and the immersion order. Specifically, a conjecture proposed by Abu-Khzam and Langston in 2003, which says that the complete graph with $t$ vertices can be immersed in any $t$-chromatic graph, is studied.

First, we present a general result about immersions and prove that the conjecture holds for graphs whose complement does not contain any induced cycle of length four and also for graphs having the property that every set of five vertices induces a subgraph with at least six edges. 

Then, we study the class of all graphs with independence number less than three, which are graphs of interest for Hadwiger's Conjecture. We study such graphs for the immersion-analog. If Abu-Khzam and Langston's conjecture is true for this class of graphs, then an easy argument shows that every graph of independence number less than $3$ contains $K_{\left\lceil\frac{n}{2}\right\rceil}$ as an immersion. We show that the converse is also true. That is, if every graph with independence number less than $3$ contains an immersion of $K_{\left\lceil\frac{n}{2}\right\rceil}$, then Abu-Khzam and Langston's conjecture is true for this class of graphs. 
Furthermore, we show that every graph of independence number less than $3$ has an immersion of $K_{\left\lceil\frac{n}{3}\right\rceil}$.
\end{abstract}

\section{Introduction}

Vertex coloring has been a very important topic in graph theory. The usual goal, and the one considered here, is to color every vertex of a graph such that adjacent vertices get different colors. The \textit{chromatic number} of a graph $G$, denoted $\chi(G)$, is the minimum number of colors required to color its vertices. If $\chi(G)=t$, then we say that $G$ is \textit{$t$-chromatic}. 

It has been suspected for a long time that if a graph cannot be colored with $t-1$ colors, then it has to somehow contain the complete graph $K_t$ with $t$ vertices. At some point in the 40's, Haj\'os \cite{HajosConjecture} conjectured that the relation of containment was the topological order. This conjecture is true for $t \leq 4$ \cite{DiracHajosConjecture}, but false for $t \geq 7$ \cite{CatlinHajosConjecture}. It remains open for $t \in \{5,6\}$. In 1943 Hadwiger \cite{HadwigerConjecture} suggested that the containment had to be the minor order, i.e. he conjectured that every $t$-chromatic graph contains $K_t$ as a minor. It was shown that Hadwiger's conjecture holds for $t=5$ \cite{WagnerHadwigerConjecture} and $t=6$ \cite{RSTHadwigerConjecture}. But it remains uncertain whether or not the conjecture is true for $t \geq 7$. 

In this article we study a different order, the immersion order, which is defined by lifts of edges. A \textit{lift} of two (adjacent) edges $uv$ and $vw$, with $u \neq w$ and $uw \notin E(G)$, consists of deleting $uv$ and $vw$, and adding the edge $uw$. And a graph $H$ is \textit{immersed} in a graph $G$ if $H$ can be obtained from $G$ by performing lifts of edges and deleting vertices and/or edges. We denote this by $H \imm G$. We also say that $G$ contains an \textit{immersion} of $H$. This definition is equivalent \cite{GraphColImm} to the existence of an injective function $\phi:V(H)\rightarrow V(G)$ such that:

\begin{enumerate}
\item For every $uv \in E(H)$, there is a path in $G$, denoted $P_{uv}$, which connects $\phi(u)$ and $\phi(v)$.

\item The paths $\{P_{uv} : uv \in E(H)\}$ are pairwise edge-disjoint.
\end{enumerate}

If the paths $P_{uv}$ are internally disjoint from $\phi(V(H))$, then we say that the immersion is \textit{strong}. We call the vertices in $\phi(V(H))$ the \textit{corner vertices} of the immersion. 

Clearly topological containment implies immersion containment (strong immersion containment, actually). However, the minor order and the immersion order are not comparable. The immersion order, although initially much less studied than the minor and topological orders, has received a large amount of attention recently \cite{Booth1999344, Fellows1988727, Fellows:1992:WTA:131829.131839, Fellows1994769, ForbKuratImm, Langston1998191, StructureNotFixedImm}. In fact, Robertson and Seymour extended their proof of Wagner's famous conjecture \cite{GraphMinorTheorem}, to prove that the immersion order is a well-quasi-order \cite{GraphMinorTheoremImmersion}. 

In analogy to Hadwiger and Haj\'os' conjectures, Lescure and Meyniel \cite{Lescure1988325} conjectured the following.

\begin{conjj}
If $\chi(G) \geq t$, then $G$ contains a strong immersion of $K_t$. 
\end{conjj}

Independently, Abu-Khzam and Langston \cite{GraphColImm} proposed a weaker statement.

\begin{conj}[Abu-Khzam and Langston] \label{conj1}
If $\chi(G) \geq t$, then $K_t$ is immersed in $G$.
\end{conj}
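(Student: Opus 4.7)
The plan is to attack Conjecture~\ref{conj1} by induction on $t$, supplemented by structural reductions to dense graph classes. For $t\le 4$ the conjecture is immediate from Haj\'os' theorem, since topological containment implies immersion containment. For the inductive step I would pass to a $t$-vertex-critical counterexample $G$, so $\delta(G)\ge t-1$ and every $G-v$ is $(t-1)$-chromatic; the induction hypothesis then supplies an immersion of $K_{t-1}$ in $G-v$, with some corner set $S\subseteq V(G)\setminus\{v\}$ of size $t-1$.

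The heart of the argument is to extend such a $K_{t-1}$-immersion to a $K_t$-immersion in $G$ by routing $t-1$ edge-disjoint paths from $v$ to the corners in $S$. Because $v$ has at least $t-1$ edges leaving it, the natural tool is a Menger/edge-connectivity argument, but these paths must be edge-disjoint both from one another and from the paths already carrying the $K_{t-1}$-immersion. I would try to secure the extra room either by exploiting high edge-connectivity---drawing on DeVos--Kawarabayashi--Mohar--Okamura-style bounds that relate minimum degree to clique immersions---or by selecting the $K_{t-1}$-immersion in $G-v$ to be \emph{economical} around $N(v)$, so that the $\delta(G)\ge t-1$ edges leaving $v$ can be completed to disjoint $v$-to-$S$ routes without trespassing on the existing immersion.

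Because the general inductive step appears out of reach, the secondary plan is to settle meaningful special cases, mirroring the strategy indicated by the abstract. For graph classes so dense that the corner set can be chosen pairwise adjacent, the required paths are single edges and no routing is needed; the hypotheses ``$\overline{G}$ has no induced $C_4$'' and ``every $5$-set spans at least $6$ edges'' seem tailored to exactly this situation, reducing the conjecture to a structural extraction of a large clique. For graphs with $\alpha(G)\le 2$ the complement is triangle-free and $\chi(G)\ge\lceil n/2\rceil$, so the conjecture predicts an immersion of $K_{\lceil n/2\rceil}$; here I would iterate the operation ``identify two non-adjacent vertices via a short path through a common neighbour,'' which is precisely the manoeuvre immersions support and which the triangle-free complement makes abundantly available.

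The main obstacle in every variant of this attack is the edge-disjointness condition on the paths: ordinary connectivity arguments happily reuse edges, whereas immersions forbid this, so assembling many edge-disjoint $v$-to-$S$ routings while respecting the paths of a pre-existing sub-immersion is where the proof is most likely to stall. Refinements---such as working with strong immersions, budgeting edges at each vertex via a discharging argument, or restricting to subgraphs with a tightly controlled degree sequence---would probably be needed to push past this bottleneck, and it is plausible that only the restricted classes mentioned in the abstract yield to the method without further new ideas.
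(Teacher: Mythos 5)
This statement is an open conjecture, not a theorem; the paper never proves it, and indeed labels it explicitly as Conjecture~\ref{conj1}. What the paper establishes are the special cases in Theorems~\ref{Teo56Denso}, \ref{TeoC4Inducido} and~\ref{TeoEneTercios}. You recognize this and offer a research programme rather than a proof, so the right thing to assess is whether your sketch of the special cases matches what the paper actually does, and whether your general step could possibly work. Your general inductive step cannot: passing to a $t$-critical subgraph only gives $\delta(G)\ge t-1$, and the paper cites that for $k\ge 8$ there exist graphs of minimum degree $k-1$ with no $K_k$ immersion, so the bound $f(k)\ge k$ already blocks a purely degree-based extension of a $K_{t-1}$ immersion in $G-v$ to a $K_t$ immersion in $G$. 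No ``economical'' choice of the inner immersion repairs this, because the obstruction is at the level of minimum degree, before any routing begins.

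Your reading of the special cases is also not what the paper does. You claim that in the dense classes the corner set can be taken pairwise adjacent, so ``no routing is needed.'' In fact, in Theorem~\ref{TeoC4Inducido} nonadjacent corners $u,v$ are joined by a path $uxyv$ of length three, and in Theorem~\ref{Teo56Denso} the case $|c_i|=|c_j|=2$ with $c_{ij}$ connected also uses a longer chain. The device doing all the work, which your sketch never surfaces, is the \emph{chain} technique from Section~\ref{seccionDefiniciones}: fix an optimal colouring, choose a dominating vertex of each colour as a corner, and route the path between the corners of colours $i$ and $j$ entirely inside $c_{ij}$, the subgraph induced by those two colour classes. Since $c_{ij}$ and $c_{kl}$ are automatically edge-disjoint whenever $\{i,j\}\ne\{k,l\}$, edge-disjointness of the whole system is free; the structural hypotheses (no induced $C_4$ in $\overline{G}$, or $(5,6)$-density) are used only to guarantee that a chain exists between each pair of corners. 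Similarly, for $\alpha(G)\le 2$ the paper does not ``iterate'' identifications: Theorem~\ref{TeoEneTercios} fixes $U$ of size $\lceil n/3\rceil$ as corners, sends every missing edge through a common neighbour in $W=V(G)\setminus U$, and uses a degree count against $\delta(G)\ge\lfloor 2n/3\rfloor$ to show the greedy assignment never exhausts $W$. Without the chain idea or some other global mechanism forcing edge-disjointness, a proposal along your lines cannot recover even the special cases the paper proves.
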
 

Since Haj\'os' conjecture holds for $t \leq 4$, Abu-Khzam and Langston's conjecture is true for $t \leq 4$, as topological order is just a particular case of immersion order.

Each graph $G$ with $\chi(G)=t$ must contain a \textit{$t$-critical} subgraph, i.e., a graph $\widetilde{G}$ such that $\chi(\widetilde{G})=t$ and $\chi(H)<t$ for every proper subgraph $H$ of $\widetilde{G}$. Furthermore, it is easy to see that every $t$-critical graph must have minimum degree at least $t-1$. Using this fact, DeVos, Kawarabayashi, Mohar and Okamura \cite{ImmSmall} resolved Abu-Khzam and Langston's conjecture for small values of $t$.

\begin{teo}[\cite{ImmSmall}] \label{ALchico}
Let $f(k)$ be the smallest integer such that every graph of minimum degree at least $f(k)$ contains an immersion of $K_k$. Then $f(k)=k-1$ for $k \in \{5,6,7\}$.
\end{teo}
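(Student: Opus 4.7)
The lower bound $f(k)\ge k-1$ is immediate: $K_{k-1}$ has minimum degree $k-2$ but only $k-1$ vertices, so it cannot contain a $K_k$-immersion, which requires $k$ distinct corner vertices.

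For the upper bound $f(k)\le k-1$ when $k\in\{5,6,7\}$, I would argue by contradiction, taking a counterexample $G$ with $|V(G)|$ minimum: so $\delta(G)\ge k-1$, $K_k\not\imm G$, and every strictly smaller graph of minimum degree at least $k-1$ does immerse $K_k$. The first task is to extract strong edge-connectivity from this minimality. I would show $G$ must be $(k-1)$-edge-connected by analysing a hypothetical edge cut $F$ of size less than $k-1$: each side of the cut, after a suitable completion that restores the minimum-degree hypothesis while keeping the vertex count strictly smaller, immerses $K_k$ by minimality, and one of these immersions can be transferred back to $G$ itself, a contradiction.

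Next, I would pick a vertex $v$ of degree exactly $k-1$ with $N(v)=\{u_1,\ldots,u_{k-1}\}$ (minimality ensures such a $v$ exists). The reduction target is to produce $\binom{k-1}{2}$ pairwise edge-disjoint paths in $G-v$ joining all pairs $u_i,u_j$: combined with the $k-1$ edges $vu_i$, these paths give a $K_k$-immersion with corner set $\{v,u_1,\ldots,u_{k-1}\}$, contradicting the choice of $G$. Equivalently, I need a $K_{k-1}$-immersion in $G-v$ with corners prescribed to be $N(v)$. To build this, I would invoke Mader's splitting-off theorem at $v$: since $v$ lies in no small cut (by the first step), its incident edges can be iteratively paired and lifted so that edge-connectivity between all remaining vertex pairs is preserved, yielding a smaller multigraph $G'$ on $V(G)\setminus\{v\}$ with $\delta(G')\ge k-1$. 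By minimality $G'$ immerses $K_k$, and this immersion can be pulled back through the lifts to a $K_k$-immersion in $G$.

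The principal obstacle is the splitting step: one must verify that the hypotheses of Mader's theorem apply at $v$, that the resulting multigraph $G'$ actually satisfies the minimum-degree condition needed to invoke induction, and that the lifted edges can be coherently uncontracted into edge-disjoint paths through $v$ without collisions. The parity of $k-1$ and the exact cardinalities at play make this analysis delicate; the constraints close just tightly enough for $k\in\{5,6,7\}$, which is presumably why the theorem is stated only in this range and why extending the argument to $k\ge 8$ is genuinely harder.
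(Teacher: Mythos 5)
This theorem is not proved in the paper; it is quoted verbatim as a result of DeVos, Kawarabayashi, Mohar and Okamura, so there is no in-paper argument to compare against. Taken on its own merits, your outline contains the standard opening moves (minimal counterexample, try to force high edge-connectivity, split off at a low-degree vertex, recurse), and the lower bound $f(k)\ge k-1$ is fine, but the upper-bound sketch has real gaps. First, vertex-minimality of the counterexample does not by itself produce a vertex of degree exactly $k-1$; you would need an additional edge-minimality assumption or an explicit deletion argument. Second, after a full splitting-off at $v$ the result is a \emph{multigraph}, while the inductive hypothesis as you have set it up is about simple graphs with minimum degree at least $k-1$; suppressing multiplicities can drop degrees, so the recursion does not close without more work. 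Third, if $k$ is even then $\deg(v)=k-1$ is odd, Mader's theorem cannot pair off all edges at $v$, and you never say what happens to the leftover edge. Fourth, the edge-connectivity step (``a suitable completion that restores the minimum-degree hypothesis'') is an unargued hand-wave that, as written, could just as well fail.

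The most serious problem, though, is structural: nothing in your outline actually uses $k\le 7$. Every step you describe is degree-generic, so if the argument went through it would prove $f(k)=k-1$ for all $k$ --- but the paper itself points out that $f(k)\ge k$ for $k\ge 8$, so that conclusion is false. A correct proof of the $k\in\{5,6,7\}$ cases must have a step that genuinely exploits the smallness of $k$ (in the DeVos--Kawarabayashi--Mohar--Okamura paper this comes out of a detailed structural/case analysis near a minimum-degree vertex), and your remark that ``the constraints close just tightly enough'' is a placeholder for exactly the part of the proof that is missing. As it stands the proposal is a plausible research plan, not a proof.
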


For $k \geq 8$, however, $f(k) \geq k$ \cite{NoImm, MinDegImm}, i.e. $\delta(G) \geq k-1$ does not guarantee an immersion of $K_k$ in $G$.

Theorem~\ref{ALchico} solves Abu-Khzam and Langston's conjecture for very small values of $t$. We are interested here in the other end of the spectrum, where $t$ is close to the number of vertices. So we restrict our attention to classes of graphs which are quite dense. We already know some properties about dense graphs, such that if a graph has $2cn^2$ edges, then it contains a strong immersion of the complete graph on at least $c^2n$ vertices \cite{MinDegImm}.

One very special case of dense graphs are the complete multipartite graphs. We prove the following result.

\begin{teo} \label{multi}
Let $G$ be a complete multipartite graph of $k\geq2$ classes with $s$ vertices each. Then $G$ has a strong immersion of $H$, where,
$$H =
\begin{cases}
K_{(k-1)s+1} & \text{ if } s \text{ is even}\\ 
K_{(k-1)s} & \text{ if } s\neq1 \text{ and } s \text{ is odd} \\ 
K_k & \text{ if } s=1 
\end{cases}	
$$
\end{teo}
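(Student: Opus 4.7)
The plan is to split into three cases according to $s$, reserving one of the $k$ color classes, say $V_k=\{v_1,\dots,v_s\}$, to supply the internal (non-corner) vertices used to route the necessary paths, and to take all of the remaining classes as corner vertices (together with one extra corner in the even case). The case $s=1$ is immediate since $G=K_k$. For $s\geq 2$ even, I would choose the corner set to be $V_1\cup\cdots\cup V_{k-1}\cup\{v_1\}$, so that the non-corner vertices are exactly $\{v_2,\dots,v_s\}$; for $s\geq 3$ odd, the corner set will be $V_1\cup\cdots\cup V_{k-1}$, with all $s$ vertices of $V_k$ available as internal vertices. Counting gives $(k-1)s+1$ and $(k-1)s$ corners, as required.

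Next, I would observe that the only nontrivial routing is among pairs of corners lying in the same part $V_i$ with $i<k$: every other pair of corners is already joined by a direct edge in $G$ (using either an edge across two parts or, in the even case, an edge from a $V_j$-corner to $v_1$). Crucially, the paths I construct to handle same-part pairs in $V_i$ will only use edges from $V_i$ to $V_k$, so paths for different indices $i\neq i'$ automatically use disjoint edge sets. This reduces the problem to a separate, identical task for each $i<k$: produce $\binom{s}{2}$ edge-disjoint length-two paths between the pairs of $V_i$, with internal vertices chosen from the available vertices of $V_k$.

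The heart of the argument is to match this task with a classical edge decomposition of $K_s$. When $s$ is even, I would invoke a $1$-factorization of the complete graph on $V_i$ into $s-1$ perfect matchings $M_2,\dots,M_s$, and for each edge $uu'\in M_j$ use the path $u\,v_j\,u'$. When $s$ is odd, I would use a proper $s$-edge-coloring of $K_s$ (equivalently, a $1$-factorization of $K_{s+1}$ restricted to $V_i$) into $s$ near-matchings $M_1,\dots,M_s$ of size $(s-1)/2$, and again route $uu'\in M_j$ through $v_j$. The counts match exactly, $(s-1)\cdot s/2=s\cdot(s-1)/2=\binom{s}{2}$, which is precisely why the loss of one corner in the odd case is unavoidable with this construction.

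Finally I would verify edge-disjointness within each $V_i$: two paths sharing the same intermediate $v_j$ correspond to disjoint matching edges, so they share only the vertex $v_j$ and no edge; two paths with distinct intermediates share no edge a fortiori. Since every internal vertex of a routed path lies in $V_k\setminus\{v_1\}$ (even case) or $V_k$ (odd case), and these sets are disjoint from the corner set, the immersion is strong. The main obstacle I expect is conceptual rather than computational: recognizing that once one fixes $V_k$ as the reserve, the entire problem collapses to packing $\binom{s}{2}$ two-paths through $V_k$, and that the parity-dependent existence of a $1$-factorization of $K_s$ dictates precisely the split in the statement. The remaining verifications are routine bookkeeping.
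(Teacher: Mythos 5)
Your proof is correct and takes essentially the same approach as the paper: reserving one part for internal vertices, taking the rest (plus one spare vertex when $s$ is even) as corners, and routing same-part pairs via a proper edge-coloring of $K_s$, which is what your $1$-factorization (or near-$1$-factorization) of $K_s$ amounts to. The paper phrases this in terms of $\chi'(K_s)$, but the construction, the parity split, and the edge-disjointness argument are the same.
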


We will call a graph \textit{$(k,s)$-dense} if every set of $k$ vertices induces a subgraph with at least $s$ edges. We prove the following two results. 

\begin{teo} \label{Teo56Denso}
Every (5,6)-dense graph $G$ contains a strong immersion of $K_{\chi(G)}$.
\end{teo}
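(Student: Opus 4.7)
The plan is to work in the complement $\overline{G}$: being $(5,6)$-dense means every $5$-subset of $\overline{G}$ spans at most $4$ edges, a strong local sparsity condition. First I would extract from it a structural classification of $\overline{G}$. A direct count shows that $\overline{G}$ contains no $K_4$ and no $K_4^-$ (any $4$ vertices spanning $\geq 5$ edges together with a fifth vertex give $\geq 5$ edges on $5$ vertices), so $\alpha(G) \leq 3$, and any triangle of $\overline{G}$ has a severely restricted neighborhood. Iterating these local constraints on $\overline{G}$'s components, I expect to obtain a short and usable list: each connected component of $\overline{G}$ is an isolated vertex or edge, a tree, a path, a cycle of length $3$, $4$, or at least $6$ (length $5$ is forbidden, since $C_5$ has $5$ edges on $5$ vertices), or one of a handful of small dense pieces that can only appear in isolation.

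With this classification, the second step is to pin down $\chi(G)$ and simultaneously produce an explicit optimal coloring whose color classes will serve as templates for the corners of the immersion. Each component of $\overline{G}$ can be handled separately, since $\alpha$ restricted to a component is easily read off from the classification and $\alpha(G) \leq 3$ bounds color-class sizes. The resulting coloring should have the property that every color class sits inside a single $\overline{G}$-component and that almost every color class is a singleton.

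To realize $K_{\chi(G)}$ as a strong immersion, I would choose one corner $\phi(c)$ per color class and connect corners by pairwise edge-disjoint paths internally disjoint from $\phi(V(H))$. Since $\overline{G}$ is extremely sparse, most corner pairs are adjacent in $G$ and their path is a single $G$-edge. The hard pairs, those joined by a non-edge of $G$, correspond to non-edges of $\overline{G}$ whose endpoints necessarily lie in the same $\overline{G}$-component; there are only a bounded number of such pairs per component, and each can be repaired by a short path through the non-corner vertices of the color classes involved, invoking the general immersion result mentioned in the introduction (a Menger-type lemma producing several edge-disjoint paths in dense subgraphs) to handle them all at once. The main obstacle will be exactly this simultaneous routing inside the ``heaviest'' components of $\overline{G}$ --- triangles, $C_4$'s, and longer cycles --- which contribute several non-adjacent corner pairs at the same time; the $(5,6)$-dense hypothesis should be used most sharply there, since it forces the $G$-neighborhoods of every such small $\overline{G}$-component to be very rich, leaving enough spare edges to absorb all the required paths without collisions.
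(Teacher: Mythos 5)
Your proposal takes a genuinely different route from the paper, but it has two concrete gaps that, as written, make it unworkable.

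\textbf{The classification of $\overline{G}$ is wrong.} The condition ``every $5$-subset of $\overline{G}$ spans at most $4$ edges'' is indeed a local sparsity condition, and your observations that $\overline{G}$ is $K_4$-free, $K_4^-$-free, and $C_5$-free are correct. But the conclusion that every component is a tree, a path, $C_3$, $C_4$, a cycle of length $\geq 6$, or a small isolated dense piece does not follow. A simple counterexample: take $\overline{G}$ to be a $6$-cycle $v_1\ldots v_6$ with one extra pendant vertex $p$ attached to $v_1$. Every $5$-subset spans at most $4$ edges (the worst cases, e.g.\ $\{p,v_1,v_2,v_3,v_6\}$ or any $5$ consecutive cycle vertices, span exactly $4$), yet this component is neither a tree nor a cycle. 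More generally, unicyclic graphs with a long enough cycle, stars of arbitrary degree, and various other tree-plus-cycle structures all satisfy the local sparsity condition, so the ``short and usable list'' does not exist in the form you expect, and the subsequent steps that lean on it (``$\alpha$ restricted to a component is easily read off from the classification'', ``almost every color class is a singleton'') are unsupported.

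\textbf{The routing step is the crux, and it is hand-waved.} You invoke ``the general immersion result mentioned in the introduction (a Menger-type lemma producing several edge-disjoint paths in dense subgraphs)''. No such lemma appears in the paper; the only density-to-immersion result cited there (a graph with $2cn^2$ edges contains a strong immersion of $K_{c^2 n}$) is about extracting \emph{one} complete immersion from a dense graph, not about routing a prescribed family of edge-disjoint paths between prescribed terminals, and it loses a quadratic factor that you cannot afford here. The difficulty of simultaneously routing all the ``bad pairs'' is exactly where the proof needs an idea, and your proposal supplies none.

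For comparison, the paper's proof never touches the global structure of $\overline{G}$. It takes a $\chi(G)$-coloring, shows $(5,6)$-density forces every color class to have at most $3$ vertices, picks one dominating vertex per color as corners (with a mild extra constraint on pairs of size-$2$ classes whose union is disconnected), and then routes each corner pair by a \emph{chain}, i.e.\ a path inside the two-color subgraph $c_{ij}$. Edge-disjointness is then automatic since distinct color pairs give edge-disjoint subgraphs. The density hypothesis is used only locally, to show that each chain can be taken to be a single edge in most cases; no classification and no general routing lemma is required. If you want to salvage your complement-side approach, you would need (i) an actually correct structure theorem for $\overline{G}$ --- this would have to accommodate unicyclic components with pendant trees, stars, etc.\ --- and (ii) a concrete argument, not a citation, that the bad pairs can be routed edge-disjointly, most likely by something like the paper's chain/Kempe-chain device anyway.
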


\begin{teo} \label{TeoC4Inducido}
Any graph $G$ whose complement has no induced cycle of length four contains a strong immersion of $K_{\chi(G)}$.
\end{teo}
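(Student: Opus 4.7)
The plan is to proceed by induction on $|V(G)|$ via a minimum-counterexample argument. By passing to a $t$-critical subgraph (which remains $2K_2$-free as an induced subgraph), I may assume $G$ is itself $t$-critical, where $t=\chi(G)$. If $G=K_t$ there is nothing to prove, so fix a non-edge $uv$.

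The heart of the argument is a structural lemma: set $B=N(u)\setminus N(v)$ and $C=N(v)\setminus N(u)$; then the $2K_2$-freeness of $G$ forces every $b\in B$ to be adjacent to every $c\in C$. Indeed, if $bc\notin E(G)$ the four vertices $\{u,b,v,c\}$ would induce a $2K_2$ with edges exactly $ub$ and $vc$, because $uv$, $uc$, $vb$, and $bc$ are all non-edges by the definitions of $B$ and $C$. Criticality then disposes of the easy case $B=\emptyset$: it would force $N(u)\subseteq N(v)$, hence $\chi(G-u)=t$, and the induction hypothesis applied to the (smaller, $2K_2$-free) graph $G-u$ would already yield a strong immersion of $K_t$ inside $G$, contradicting the counterexample assumption. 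Thus $B$ and $C$ are both non-empty.

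Now I would apply the induction hypothesis to $G[\{u\}\cup B]$ and $G[\{v\}\cup C]$. Since $u$ is adjacent to every vertex of $B$, $\chi(G[\{u\}\cup B])=\chi(G[B])+1$, and analogously for $v$; the induction hypothesis thus supplies strong immersions of $K_{\chi(G[B])+1}$ and $K_{\chi(G[C])+1}$ in these two subgraphs. The complete bipartite edges between $B$ and $C$ provided by the structural lemma let me glue the two sub-immersions into a single immersion of $G$ whose corners lie in $(\{u\}\cup B)\cup(\{v\}\cup C)$: the direct edges $bc$ serve as the cross-paths between opposite-side corners, while the remaining cross-paths (between $u$ and a $C$-corner, between $v$ and a $B$-corner, and between $u$ and $v$) are routed through unused vertices of $B\cup C$ via the complete bipartite join.

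The main difficulty is bringing the number of corners up to $t$. While the identity $\chi(G[\{u,v\}\cup B\cup C])=\chi(G[B])+\chi(G[C])$ (immediate from the join structure and disjointness of the $B$- and $C$-palettes) gives the inequality $\chi(G[B])+\chi(G[C])\leq t$, the matching lower bound $\chi(G[B])+\chi(G[C])+2\geq t$ can fail in general, as witnessed by graphs like $C_5\oplus C_5$. To deal with this, I would treat separately the case where $G$ itself admits a nontrivial join decomposition $G=G_1\oplus G_2$: the class of $2K_2$-free graphs is closed under joins, so the induction hypothesis applied to each factor, combined via the complete bipartite join between them, directly yields a strong immersion of $K_{\chi(G_1)+\chi(G_2)}=K_{\chi(G)}$. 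The residual ``join-prime'' case (where $\overline{G}$ is connected) is the technical heart of the proof: here one would need to choose the non-edge $uv$ judiciously and exploit the common neighborhood $A=N(u)\cap N(v)$ to supply the remaining corners and internal path vertices, and I expect this augmentation step to be the main obstacle.
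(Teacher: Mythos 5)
Your proposal is incomplete, and you say so yourself: after reducing to a vertex-critical $G$, picking a non-edge $uv$, and observing (correctly) that $2K_2$-freeness makes $B=N(u)\setminus N(v)$ complete to $C=N(v)\setminus N(u)$, the construction only yields $\chi(G[B])+\chi(G[C])+2$ corners, and your own $C_5\oplus C_5$ example shows this can be far below $t$ because all the chromatic weight may sit in $A=N(u)\cap N(v)$. You defer the remaining ``join-prime'' case (where $\overline{G}$ is connected) to an unspecified ``augmentation step'' and flag it as the main obstacle. That case is not an afterthought: it is where essentially all the work lives, and no mechanism is proposed for promoting vertices of $A$ to corners while keeping the paths edge-disjoint. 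As written, this is a genuine gap, not a deferred routine verification. (A smaller issue: the identity $\chi(G[\{u,v\}\cup B\cup C])=\chi(G[B])+\chi(G[C])$ is true, but your justification ``immediate from the join structure'' is off, since $u$ is not joined to $C$ nor $v$ to $B$; one needs to absorb $u$ into a $C$-color and $v$ into a $B$-color.)

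The paper avoids all of this machinery. Fix a coloring of $G$ with $\chi(G)$ colors, and for each color pick a dominating vertex $u_i$ (one with a neighbor in every other color class) as a corner. For any two corners $u_i,u_j$: if $u_iu_j\in E(G)$, take the edge. Otherwise, domination gives $x$ of color $j$ with $u_ix\in E(G)$ and $y$ of color $i$ with $u_jy\in E(G)$; among $\{u_i,x,u_j,y\}$ the pairs $u_iu_j$, $u_iy$, $xu_j$ are non-edges (the last two being monochromatic), so if $xy\notin E(G)$ these four vertices induce $2K_2$, i.e.\ an induced $C_4$ in $\overline G$. Hence $xy\in E(G)$ and $u_ixyu_j$ is a path lying entirely in the two-color subgraph $c_{ij}$. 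Distinct color pairs give edge-disjoint subgraphs, so the paths are automatically edge-disjoint, and since they avoid all other corners internally the immersion is strong. This one-step, coloring-based ``chain'' argument replaces your induction, the critical-subgraph reduction, the join decomposition, and the unresolved join-prime case entirely; your structural lemma about $B$ and $C$ is really a reformulation of the same $2K_2$-free incidence fact, but deployed at the wrong level of generality.
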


Finally, we focus on the study of a special class of graphs, the graphs $G$ that have no independent set of size three, or equivalently, whose independence number $\alpha(G)$ is at most $2$. This class of graphs has been extensively studied in an attempt to solve Hadwiger's conjecture (see \cite{ASpecialCaseHadwiger, HadwigerSeagullPacking, PackingSeagulls, OnASpecialCaseHadwiger}). It is for this reason that we are interested in Abu-Khzam and Langston's conjecture restricted to these graphs. Abu-Khzam and Langston's conjecture restricted to that class reads as follows.

\begin{conj} \label{conj2}
Any graph $G$ with $\alpha(G) \leq 2$ contains an immersion of $K_{\chi(G)}$.
\end{conj}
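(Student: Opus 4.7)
The plan is to approach Conjecture~\ref{conj2} via the ``extremal'' reformulation hinted at in the abstract: every graph $G$ with $\alpha(G)\leq 2$ contains $K_{\lceil n/2\rceil}$ as an immersion. The underlying observation is that if $\alpha(G)\leq 2$, then every color class of an optimal coloring has size at most $2$, forcing $\chi(G)\geq\lceil n/2\rceil$. Consequently Conjecture~\ref{conj2} trivially implies the extremal statement, and the overall task splits into two pieces: proving the reverse reduction, and proving the extremal statement itself.

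For the reduction, let $G$ satisfy $\alpha(G)\leq 2$ with $\chi(G)=t$ and $n\leq 2t$ vertices, and form the join $G^{*}=G\vee K_{d}$ with $d=2t-n$. Then $|V(G^{*})|=2t$, and since the join behaves as a maximum on independence number, $\alpha(G^{*})=\max(\alpha(G),\alpha(K_{d}))=2$. The extremal hypothesis applied to $G^{*}$ yields $K_{t}\imm G^{*}$, and the delicate step is transferring this immersion back into $G$: the vertices of $K_{d}$ form a clique fully joined to $V(G)$, so they are interchangeable, and any corner vertex landing in $V(K_{d})$ ought to be swappable for an unused vertex of $V(G)$ (which exists since $n\geq t$), with its incident paths rerouted through $V(K_{d})$, exploiting the abundance of edges on the added side while preserving edge-disjointness.

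The main obstacle is the extremal statement, where the paper itself only reaches $K_{\lceil n/3\rceil}\imm G$. To push from $\lceil n/3\rceil$ to $\lceil n/2\rceil$, I would use that $\alpha(G)\leq 2$ is equivalent to $\overline{G}$ being triangle-free, so Mantel's theorem gives $|E(\overline{G})|\leq n^{2}/4$ and $G$ is extremely dense, with Andr\'asfai-type structural theorems further constraining $\overline{G}$. A natural attack is induction on $n$: locate a pair $u,v$ with $uv\notin E(G)$ whose deletion preserves $\alpha\leq 2$, obtain a $K_{\lceil(n-2)/2\rceil}$ immersion in $G-\{u,v\}$, then extend it by one new corner vertex, using $u,v$ and their common neighborhood to realize the new edge-disjoint paths. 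The hardest case is when $\overline{G}$ is nearly extremal for Mantel, so that inductive removals are tightly constrained and the new paths must be packed without destroying any previously committed edges.
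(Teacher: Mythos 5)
The statement you are addressing is a \emph{conjecture}: the paper does not prove it, and it remains open. The paper's actual contribution here is Theorem~\ref{TeoConjeturasEquivalentes} (equivalence of Conjectures~\ref{conj2} and~\ref{conj3}) together with the weaker Theorem~\ref{TeoEneTercios}. Your proposal correctly identifies the key reduction (Conjecture~\ref{conj3} implies Conjecture~\ref{conj2}), but your route to it diverges from the paper's and has a genuine gap. The paper takes a minimum counterexample $G$, proves it is $\chi(G)$-color-critical with $\overline{G}$ connected, and invokes Gallai's theorem (Theorem~\ref{teoUtil}) to conclude $|V(G)|=2\chi(G)-1$, at which point $\lceil n/2\rceil = \chi(G)$ and Conjecture~\ref{conj3} applies directly. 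Your join construction $G^*=G\vee K_d$ tries to force $n$ up to $2t$ instead, but the transfer-back step fails: an immersion of $K_t$ in $G^*$ lives in a \emph{supergraph} of $G$ and may use edges inside $K_d$ or between $K_d$ and $V(G)$ that simply do not exist in $G$. Swapping corner vertices out of $K_d$ does not help, because you propose to ``reroute through $V(K_d)$,'' which is circular --- those rerouted paths still traverse the added vertices. Containment is in the wrong direction: you need an immersion realized entirely inside $G$, and nothing in the construction forces that.

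The second, larger gap is that the extremal statement (Conjecture~\ref{conj3}) is itself exactly as open as Conjecture~\ref{conj2}; the two are equivalent, as the paper shows, so ``reducing'' one to the other cannot by itself produce a proof. Your inductive sketch (delete a nonadjacent pair $u,v$, extend a $K_{\lceil(n-2)/2\rceil}$ immersion by one corner) is natural but unworked: you do not explain how a single new corner vertex can be attached by $\lceil(n-2)/2\rceil$ edge-disjoint paths to the existing corners without disturbing the inherited immersion, and you do not handle the near-extremal case you yourself flag as hardest. The paper's $\lceil n/3\rceil$ result (Theorem~\ref{TeoEneTercios}) succeeds precisely because it avoids induction and instead does a global degree count showing every nonadjacent corner pair has enough fresh common neighbours in $W$; that counting argument does not stretch to $\lceil n/2\rceil$. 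In short, your proposal restates the known equivalence (with an unjustified alternative reduction) and sketches an attack on the open core, but it does not close either gap.
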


If $\alpha(G) \leq 2$, then in any vertex coloring of $G$, every color class, being an independent set, has at most two vertices, which implies that $\chi(G) \geq \frac{n}{2}$. Abu-Khzam and Langston's conjecture would thus imply that $G$ must contain an immersion of $K_{\lceil \frac{n}{2} \rceil}$. The latter gives rise to a new conjecture.

\begin{conj} \label{conj3}
Any graph $G$ with $\alpha(G) \leq 2$ contains an immersion of $K_{\lceil \frac{n}{2} \rceil}$.
\end{conj}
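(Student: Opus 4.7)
The plan is to exploit that $\alpha(G) \leq 2$ makes $\bar G$ triangle-free, so that every three vertices of $G$ span at least one edge and $G$ is consequently dense and highly connected. The obvious starting point is a maximum matching $M=\{u_1v_1,\ldots,u_mv_m\}$ in $\bar G$. Any two vertices that $M$ leaves unsaturated must be adjacent in $G$, otherwise their $\bar G$-edge could extend $M$. Hence the $n-2m$ unsaturated vertices form a clique $C$ in $G$. If $|C|=n-2m\geq\lceil n/2\rceil$ then $G$ already contains $K_{\lceil n/2\rceil}$ as a subgraph, so I may assume $m>n/4$.

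Write $A=\{u_1,\ldots,u_m\}$, $B=\{v_1,\ldots,v_m\}$, and $S=A\cup C$, so that $|S|=n-m\geq\lceil n/2\rceil$. I would aim to build a $K_{|S|}$-immersion with corner set $S$, using the vertices of $B$ as internal routers. The triangle-free hypothesis on $\bar G$ delivers exactly the adjacencies one wants for routing: if $u_iu_j\notin E(G)$, then the triples $\{u_i,u_j,v_i\}$ and $\{u_i,u_j,v_j\}$ each contain two known $\bar G$-edges, so triangle-freeness forces $u_jv_i,u_iv_j\in E(G)$; likewise, any non-edge $u_iw$ with $w\in C$ forces $wv_i\in E(G)$. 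Every non-edge of $G[S]$ therefore has a natural menu of short routing paths through $B$ (for example, $u_i-v_j-v_i-u_j$ whenever $v_iv_j\in E(G)$, and length-two routes $u_i-v_k-u_j$ for any $v_k$ outside $N_{\bar G}(u_i)\cup N_{\bar G}(u_j)$).

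The crux of the proof is to choose all these routing paths edge-disjoint simultaneously. I would model this as a multicommodity routing problem whose demands are the non-edges of $G[S]$ and whose transit edges are the $B$-incident edges of $G$. A Hall-type packing argument, or an augmenting-path reconfiguration of $M$ inside $\bar G$, should then give a feasible routing. The cruder $\lceil n/3\rceil$ bound stated in the abstract is presumably obtained by discarding enough corners of $S$ that each router $v_i$ is used only a bounded number of times; pushing up to $\lceil n/2\rceil$ requires retaining one vertex from every matched pair and using the routers essentially at full capacity.

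The main obstacle, and the reason Conjecture~\ref{conj3} remains open, is exactly this: if many non-edges of $G[S]$ cluster around a single corner $u_i$, its partner $v_i$ is overloaded and the candidate routing paths unavoidably share edges. The most promising workaround seems to be an adaptive choice, for each matched pair, of which vertex to place in $S$ and which to keep in $B$, driven by the local non-edge structure of $\bar G$; equivalently, a reconfiguration of $M$ along alternating paths, perhaps combined with a probabilistic selection followed by a deterministic fix-up. Controlling this demand/capacity tradeoff globally, so that each router's load stays within its edge-capacity, is where I expect the real difficulty to lie.
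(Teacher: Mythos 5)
This statement is \emph{Conjecture}~\ref{conj3}, not a theorem: the paper never proves it, and explicitly treats it as open. What the paper does prove is (a) the equivalence of Conjectures~\ref{conj2} and~\ref{conj3} (Theorem~\ref{TeoConjeturasEquivalentes}), (b) the weaker $K_{\lceil n/3\rceil}$ bound (Theorem~\ref{TeoEneTercios}), and (c) structural properties of a minimal counterexample. You correctly recognize the statement as open and your write-up is honestly a sketch rather than a proof, so there is no question of a hidden gap you failed to declare --- you declare it yourself.

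Your sketch is nonetheless close in spirit to two arguments that do appear in the paper, and it is worth noting where they diverge. Your ``corner/router'' decomposition, with routing paths $u\hbox{--}z\hbox{--}v$ through a transit set and an edge-disjointness accounting, is exactly the shape of the paper's proof of Theorem~\ref{TeoEneTercios}: there $V(G)$ is split into $U$ of size $\lceil n/3\rceil$ (corners) and $W$ of size $\lfloor 2n/3\rfloor$ (routers), and a counting argument shows that $|N_W(u)\cap N_W(v)|$ is large enough to route all non-edges at $u$ and $v$. That argument succeeds precisely because the routers outnumber the corners two to one; the slack $2\lceil n/3\rceil-3-\lfloor 2n/3\rfloor\le 0$ is tight, and it is \emph{not} available when $|S|\ge\lceil n/2\rceil$ and $|B|\le\lfloor n/2\rfloor$ as in your setup. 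So the capacity bottleneck you identify at the end is exactly the barrier the paper also runs into; your guess that the $\lceil n/3\rceil$ bound comes from ``discarding corners to bound router load'' is directionally right, though the paper achieves it by a direct counting inequality rather than by starting from a matching.

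Your matching-in-$\overline G$ idea does appear in the paper, but in a different role: in the proof of property~\ref{gradmin} for a minimal counterexample, a perfect matching of $\overline G-v$ (guaranteed by Theorem~\ref{teoUtil}) is used to route between the corners $\{v\}\cup N(v)$ via the matched partners inside the clique $\overline N(v)$. That works there because $d(v)=\lfloor n/2\rfloor$ forces $\overline N(v)$ to be a clique of size $\lfloor n/2\rfloor$, giving the routers plenty of internal edges; it does not generalize to an arbitrary maximum matching $M$ of $\overline G$. One further caution about your adjacency observations: triangle-freeness of $\overline G$ gives you $u_jv_i,u_iv_j\in E(G)$ whenever $u_iu_j\notin E(G)$, but it does not give you $v_iv_j\in E(G)$, so the length-three route $u_i\hbox{--}v_j\hbox{--}v_i\hbox{--}u_j$ you mention is conditional, and the length-two routes $u_i\hbox{--}v_k\hbox{--}u_j$ are exactly where router overload concentrates. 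Resolving that global demand/capacity tradeoff is, as you say, where the real difficulty lies; the paper leaves it open too.
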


We just saw that Conjecture~\ref{conj2} implies Conjecture~\ref{conj3}. However, the two conjectures are actually equivalent. Following ideas from \cite{OnASpecialCaseHadwiger} we show the next result.

\begin{teo} \label{TeoConjeturasEquivalentes}
Conjectures~\ref{conj2} and~\ref{conj3} are equivalent.
\end{teo}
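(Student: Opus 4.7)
The implication Conjecture~\ref{conj2} $\Rightarrow$ Conjecture~\ref{conj3} is established in the paragraph just before the statement, so I only need to plan the converse. Assume Conjecture~\ref{conj3} holds, let $G$ be any graph with $\alpha(G)\le 2$, and write $t=\chi(G)$ and $n=|V(G)|$. Since each color class has at most two vertices, $n\le 2t$.

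The plan is to embed $G$ into a graph on exactly $2t$ vertices where Conjecture~\ref{conj3} applies at equality, and then transfer the resulting immersion back into $G$. Set $m:=2t-n\ge 0$, and let $G^{*}$ be the graph obtained from $G$ by adding a new clique $C$ on $m$ vertices and joining every vertex of $C$ to every vertex of $V(G)$. Then $|V(G^{*})|=2t$ and $\alpha(G^{*})\le 2$: every independent set of $G^{*}$ lies entirely in $V(G)$ (size $\le 2$, since $\alpha(G)\le 2$) or entirely in $C$ (size $\le 1$, since $C$ is a clique), because the join edges rule out mixed independent sets. Applying Conjecture~\ref{conj3} to $G^{*}$ yields $K_{t}=K_{\lceil 2t/2\rceil}\imm G^{*}$.

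The technical heart of the proof is the extraction step: given an immersion $(\phi,\{P_{ij}\})$ of $K_t$ in $G^{*}$, produce one whose corners and all path edges lie in $G$. Following \cite{OnASpecialCaseHadwiger}, I would choose, among all $K_t$-immersions in $G^{*}$, one that minimizes the number of edges used that are incident with $C$, and argue that this minimum must be zero. The structural tools at my disposal are: (a) every vertex of $C$ is adjacent to every vertex of $V(G)$ and to every other vertex of $C$; (b) $|V(G)|=n\ge t$, so whenever a corner lies in $C$ there is a non-corner vertex of $V(G)$ available to swap it with; and (c) since $\alpha(G)\le 2$ and a short argument shows $G$ is connected of diameter at most $3$ in each component, every segment of a path currently going through $C$ admits a detour of length at most $3$ inside $G$.

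The main obstacle is ensuring that when a segment through $C$ is rerouted via (c), the new edges it uses inside $G$ do not collide with edges already used by the other paths $P_{k\ell}$. I expect to handle this by an iterative exchange argument, rerouting one $C$-using segment at a time and exploiting both the multitude of $G$-detours produced by (c) and the redundancy among the $m$ vertices of $C$ (any of them can serve as a shortcut between two non-adjacent vertices of $V(G)$, so if one detour conflicts with another path a different alternative remains available). Once the minimality argument forces the number of $C$-incident edges to be zero, deleting $C$ together with all unused edges gives the desired immersion of $K_{t}=K_{\chi(G)}$ inside $G$, which is Conjecture~\ref{conj2} for $G$.
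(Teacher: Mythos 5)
Your reduction takes a genuinely different route from the paper, and the route as written has a real gap at its core. The paper argues by minimal counterexample: it shows that a vertex-minimal counterexample $G$ to Conjecture~\ref{conj2} must be $\chi(G)$-color-critical with $\overline{G}$ connected (Lemmas~\ref{critic} and~\ref{compCon}), and then invokes a theorem of Gallai (Theorem~\ref{teoUtil}) to conclude that $G-v$ has a $(\chi(G)-1)$-coloring in which every class has size exactly two, forcing $|V(G)|=2\chi(G)-1$. From this equality, $\lceil n/2\rceil=\chi(G)$, so $G$ is automatically a counterexample to Conjecture~\ref{conj3}. No graph is modified and no immersion is rerouted; the whole weight is carried by the counting identity $|V(G)|=2\chi(G)-1$.

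Your plan instead pads $G$ up to a graph $G^{*}$ on exactly $2t$ vertices by joining a clique $C$, applies Conjecture~\ref{conj3} to $G^{*}$, and then tries to pull the resulting $K_t$-immersion down into $G$. The padding step is fine ($\alpha(G^{*})\le 2$, $|V(G^{*})|=2t$, $n\ge t$ so $m\ge 0$). But the ``extraction'' lemma --- that $K_t\imm G^{*}$ forces $K_t\imm G$ --- is exactly where the mathematical content lives, and you never actually prove it: you describe an intended minimization over $C$-incident edges and an ``iterative exchange argument'' you ``expect'' to close, but no invariant is identified that guarantees progress, no termination argument is given, and the edge-collision problem (a rerouted segment colliding with edges already committed to other $P_{k\ell}$) is the whole difficulty of immersion rerouting, not a side issue. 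Your structural claim (c) is also shaky: a graph with $\alpha\le 2$ need not be connected (two disjoint cliques), and when it is disconnected, paths of the $G^{*}$-immersion that cross between components must pass through $C$ and admit no detour inside $G$ at all; that case has to be split off and handled separately, which your sketch does not do. More fundamentally, if the extraction lemma were a clean standalone fact about graphs with $\alpha\le 2$, it would essentially settle the implication on its own, so one should be suspicious that it can be established by local rerouting without some global counting or inductive input --- which is precisely what the paper's Gallai-type theorem supplies and your sketch lacks. As it stands, this is a proof plan with a promising framing but an unproved central lemma, not a proof.
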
  

A weaker version of Conjecture~\ref{conj3} is shown, namely the following result.

\begin{teo} \label{TeoEneTercios}
If $G$ is a graph with $\alpha(G) \leq 2$, then $G$ contains a strong immersion of $K_{\lceil\frac{n}{3}\rceil}$.
\end{teo}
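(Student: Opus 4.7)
My plan is to exploit a maximum matching in the complement $\bar G$, which is triangle-free since $\alpha(G) \leq 2$. Let $M = \{a_i b_i : i \in [m]\}$ be such a maximum matching and let $U = V(G) \setminus V(M)$. The maximality of $M$ forbids any edge of $\bar G$ inside $U$, so $U$ is a clique in $G$. If either $|U| \geq \lceil n/3 \rceil$ or $\omega(G) \geq \lceil n/3 \rceil$, we are already done, since $G$ contains $K_{\lceil n/3 \rceil}$ as a subgraph. I may therefore assume $|U| < \lceil n/3 \rceil$ and $\omega(G) < \lceil n/3 \rceil$. The first assumption, combined with $|U| + 2m = n$, gives $m \geq k$, where $k := \lceil n/3 \rceil$. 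The second, combined with the fact that for every $v$ the set of non-neighbors of $v$ is a clique of $G$ (any non-edge among them would form an independent triple with $v$), gives $\delta(G) \geq n - k$, and hence every non-edge $xy$ of $G$ satisfies $|N(x)\cap N(y)| = \deg(x) + \deg(y) - (n - 2) \geq n - 2k + 2 \geq k$, using that a non-edge in an $\alpha\leq 2$ graph forces $N(x) \cup N(y) = V(G) \setminus \{x,y\}$.

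I would then take as corner set $C = \{a_1, \dots, a_k\}$, using the partners $b_1, \dots, b_k$ together with the leftover matching vertices and the clique $U$ as routers. For any pair with $a_i a_j \in E(G)$ the direct edge is the path. For a non-edge $a_i a_j \in E(\bar G)$, triangle-freeness of $\bar G$ applied to the potential triangles $a_i a_j b_i$ and $a_i a_j b_j$ forces both $a_j b_i, a_i b_j \in E(G)$. In the benign ``Case I'' where additionally $b_i b_j \in E(G)$, I route along the length-three path $a_i\, b_j\, b_i\, a_j$; a direct check shows that the triple of edges $\{a_i b_j,\, b_i b_j,\, a_j b_i\}$ used by this path pins down the unordered pair $\{i,j\}$, so the family of Case~I paths is automatically pairwise edge-disjoint and disjoint from all direct-edge paths (which live entirely inside $C \times C$).

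The main obstacle is ``Case II,'' in which $b_i b_j$ is also a non-edge of $G$ and only the cross-edges $a_i b_j, b_i a_j$ are present in $G$ between the two pairs, so the length-three path above does not exist. To handle such pairs I would route through a common neighbor $c \in V(G) \setminus C$ of $b_i$ and $b_j$ along the length-four path $a_i\, b_j\, c\, b_i\, a_j$, or alternatively through a common neighbor $u \in U$ of $a_i$ and $a_j$ along the length-two path $a_i\, u\, a_j$; the common-neighbor bound above guarantees that many candidate routers exist. The heart of the proof, and what I expect to be the main technical step, is to choose these routers simultaneously for all Case~II pairs so that the resulting paths are edge-disjoint from one another and from the Case~I paths. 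This reduces to a bipartite assignment / Hall-type problem in which Case~II pairs compete for routers, each router $c$ can only serve pairs that are disjoint as subsets of $[k]$ (so its load at $c$ forms a matching among its neighbors in $C$), and routes through $U$ are automatically decoupled from the Case~I paths; the delicate bookkeeping needed to verify the Hall condition against the $O(k^2)$ Case~II demands is where the real combinatorial work lies.
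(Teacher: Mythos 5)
The paper's proof is substantially simpler than your plan and avoids the obstruction you flagged. It first observes that if some vertex has degree below $\lfloor 2n/3\rfloor$, its non-neighborhood is a clique of size at least $\lceil n/3\rceil$, and we are done. Otherwise $\delta(G)\geq\lfloor 2n/3\rfloor$, and the paper picks an \emph{arbitrary} set $U$ of $\lceil n/3\rceil$ corner vertices, letting $W=V(G)\setminus U$. Adjacent corner pairs get a one-edge path; each nonadjacent pair $\{u,v\}$ gets a length-two path $uzv$ through a fresh $z\in N_W(u)\cap N_W(v)$, where ``fresh'' means $z$ was not already used in a path touching $u$ or touching $v$ (a reuse of $z$ for a pair disjoint from $\{u,v\}$ is harmless, since the edges $uz, zv$ are pinned to $u$ and $v$). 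Then a short inclusion-exclusion using $\delta(G)\geq\lfloor 2n/3\rfloor$ and the fact that $N_W(u)\cup N_W(v)=W$ (since $\alpha\leq2$) shows the required supply bound $|N_W(u)\cap N_W(v)|\geq|\overline N_U(u)|+|\overline N_U(v)|-1$ always holds, so the greedy assignment never gets stuck. Because every path has length at most two and is internally disjoint from $U$, edge-disjointness and strongness come for free.

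Your proposal has a genuine gap in what you yourself identify as the heart of the argument. The difficulty is not mere bookkeeping: in your Case~II you route $a_i,a_j$ through $a_i\,b_j\,c\,b_i\,a_j$ with $c\in V(G)\setminus C$, but $c$ may itself be a matched vertex $b_\ell$. If so, the edge $c\,b_i=b_\ell b_i$ can coincide with the middle edge $b_\ell b_i$ of the Case~I path for the pair $\{i,\ell\}$ (whenever $a_i a_\ell\notin E(G)$ and $b_i b_\ell\in E(G)$), and symmetrically for $b_j c$. Likewise, several Case~II pairs competing for the same router through $U$ share the edges incident to it as soon as they share a corner. So the needed Hall-type verification interacts nontrivially with the already-fixed Case~I paths, and nothing in the proposal rules out that the conflict graph is infeasible; the common-neighbour count $|N(b_i)\cap N(b_j)|\geq k$ is also not shown to survive after excluding $C$-vertices, already-used $b_\ell$'s, and already-used $U$-routers. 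You also silently assume $|U|<\lceil n/3\rceil$ and $\omega(G)<\lceil n/3\rceil$ together imply $m\geq k$; this does hold after careful integer rounding, but it should be checked for $n\equiv1\pmod 3$. In short, the maximum-matching decomposition introduces length-3 and length-4 paths whose interactions are exactly what the paper's uniform length-2 routing through an arbitrary $W$ avoids.
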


An analogous result was shown by Chudnovsky \cite{HadwigerSeagullPacking}, namely that if $G$ is a graph with $n$ vertices and no independent set of size three, then $G$ contains a complete minor of size $\lceil\frac{n}{3}\rceil$. The technique used there is a nice use of induced paths of length two. Here we present a different technique.

In this article every graph is simple, without loops and parallel edges, unless stated otherwise.

This work is organized as follows. In Section~\ref{seccionDefiniciones} we present a quick review of some definitions and properties about vertex coloring that will be used through the text. In Section~\ref{seccionResultadosGenerales} we immerse a large complete graph into a multipartite complete graph (see Theorem~\ref{multi}), and also prove Theorems~\ref{Teo56Denso} and~\ref{TeoC4Inducido}. And in Section~\ref{SeccionAlpha} we prove Theorems~\ref{TeoConjeturasEquivalentes} and~\ref{TeoEneTercios}, and show a series of properties that a counterexample of Conjecture~\ref{conj2} with minimum number of vertices should satisfy.

\section{Vertex coloring} \label{seccionDefiniciones}

Given a vertex coloring $c:V(G) \rightarrow \{1,...,k\}$, we denote $c_i = \{u: c(u)=i\}$ and $c_{ij}$ the subgraph induced by the set of vertices $\{u: c(u) \in \{i, j\}\}$. 
We call a path in $c_{ij}$ a \textit{chain}, and for each $u \in V(c_{ij})$, we denote $c_{ij}(u)$ the connected component of $c_{ij}$ that contains $u$.
If $\{i,j\} \neq \{k,l\}$, then $c_{ij}$ and $c_{kl}$ are edge-disjoint graphs.
This observation is particularly important to find immersions in graphs, considering the second definition of immersion. For this reason, the use of chains will be very helpful.

Let $c:V(G) \rightarrow \{1,...,k\}$ be a vertex coloring of $G$ and let $i \in \{1,...,k\}$. We say that $u \in V(G)$ is a \textit{dominating vertex} for color $i$, if $c(u)=i$ and if for each color $j \neq i$, there is a vertex $v$ such that $c(v)=j$ and $uv \in E(G)$.
If $c:V(G) \rightarrow \{1,...,\chi(G)\}$ is a coloring of $G$ with minimum number of colors, then it is easy to check that every $i \in \{1,...,\chi(G)\}$ has a dominating vertex. 

\section{Complete Graph Immersions} \label{seccionResultadosGenerales}

Let us see first, that in a complete multipartite graph we can find an immersion of a complete graph of relatively large size. That is, let us prove Theorem~\ref{multi}.

\begin{proof}[Proof of Theorem~\ref{multi}]
The $s=1$ case is trivial, so we can assume $s>1$. We choose the vertices of $ k-1$ classes as corner vertices (in the case that $s$ is even, we will add an additional corner vertex later), and the vertices of the remaining class, let us call it $U$, will be used for the edge-disjoint paths. The paths between two vertices from different classes already exist (they are the edges between them), so we only need to worry about those vertices that are in the same class. We know that $\chi'(K_s)= s-1$ if $s$ is even, and $\chi'(K_s) = s$ if $s$ is odd (\cite[p.133]{Soifer}).

For each class of $s$ corner vertices, consider a $\chi'(K_s)$-edge-coloring of the edges that are missing (all of them). As $|U| \geq \chi'(K_s)$, we can assign each of the used colors on the edges of $K_s$ to some vertex in $U$. Say vertex $u_i \in U$ gets color $i$. Then, for two corner vertices $v$ and $w$ in the same class, we assign $P_{vw} = vu_iw$ where $vw$ is colored with color $i$.

Observe that these paths are edge-disjoint. Indeed, if two paths $P_{vw}$ and $P_{xy}$ share an edge, then $vw$ would have to be adjacent to $xy$. In addition, we would have $P_{vw} = vu_iw$, $P_{xy}=xu_iy$ for some $i \leq \chi'(G)$. That is, both $vw$ and $xy$ would have assigned color $i$, which is a contradiction.

Note that if $s$ is even, then in $U$ there is a vertex that is not being used in the edge-disjoint paths, so we can add it as a corner vertex of the immersion, as it is adjacent to all other corner vertices. Thus, we find the desired immersion, which is strong because no corner vertex is used as an internal vertex of some path.
\end{proof}

\begin{obs}
Actually, a more general result follows directly from the proof of the theorem. If $G$ is a complete multipartite graph with $k\geq2$ classes of sizes $s_1, s_2, \ldots, s_k$, with $s_k \geq s_i$, for $i \leq k-1$, then $G$ contains a strong immersion of $K_{s_1+s_2+\ldots+s_{k-1}}$.
\end{obs}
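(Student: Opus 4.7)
The plan is to mimic the proof of Theorem~\ref{multi} with the largest class playing the role of the routing class and all vertices of the remaining $k-1$ classes serving as corners. Set $U$ to be the class of size $s_k$, and take the $s_1+s_2+\cdots+s_{k-1}$ corner vertices to be precisely the vertices outside $U$. Since two corners in different classes are already adjacent in $G$, the only nontrivial task is, for each class $i\leq k-1$, to realize the missing $K_{s_i}$ on the $s_i$ corners of that class by an edge-disjoint family of length-two paths routed through $U$.

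For each such class $i$, I would fix a proper edge-coloring of $K_{s_i}$ using $\chi'(K_{s_i})$ colors and injectively assign each color to a vertex of $U$, calling the vertex assigned to color $j$ in class $i$ the router $u_j^{(i)}\in U$. Such an assignment exists because $\chi'(K_{s_i})\leq s_i\leq s_k=|U|$. Then, exactly as in the proof of Theorem~\ref{multi}, for corners $v,w$ in class $i$ whose missing edge received color $j$, I declare $P_{vw}=v\,u_j^{(i)}\,w$. A key piece of flexibility is that the $k-1$ router assignments can be made independently across classes: the same vertex of $U$ may simultaneously be $u_j^{(i)}$ for some class $i$ and $u_{j'}^{(i')}$ for some other class $i'\neq i$, because as we shall see this does not spoil edge-disjointness.

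The only step requiring real verification is edge-disjointness, and this is where one must be slightly careful precisely because of the sharing just mentioned. Suppose two paths $P_{vw}$ and $P_{xy}$ share an edge; they must then share a common router $u\in U$. If $v,w,x,y$ all lie in the same corner class~$i$, the constraint $u=u_j^{(i)}=u_{j'}^{(i)}$ forces $j=j'$, so both $vw$ and $xy$ have the same color in the chosen proper edge-coloring of $K_{s_i}$, whence $\{v,w\}\cap\{x,y\}=\emptyset$ and the four edges at $u$ used by the two paths are distinct. If instead the two paths come from distinct corner classes, then $v,w,x,y$ lie in different parts of $G$ and are automatically distinct, so again no edge repeats. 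Since no corner is ever used internally on any $P_{vw}$, the resulting immersion is strong, yielding $K_{s_1+s_2+\cdots+s_{k-1}}\imm G$. I do not foresee a genuine obstacle here: the argument is bookkeeping layered on top of the proof of Theorem~\ref{multi}, with the single new ingredient being that one router class $U$ can serve every corner class in parallel.
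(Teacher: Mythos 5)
Your proposal is correct and follows essentially the same route the paper intends: take the largest class as the routing set $U$, edge-color each $K_{s_i}$ with $\chi'(K_{s_i}) \leq s_i \leq s_k = |U|$ colors, route each missing edge through the router vertex assigned to its color, and check edge-disjointness exactly as in the proof of Theorem~\ref{multi}. Your explicit treatment of the cross-class case (two paths from different corner classes sharing a router) is a slight strengthening of the write-up in the paper, which only spells out the same-class conflict, but the argument and the key inequality are the same.
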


We now prove Theorem~\ref{Teo56Denso}.  

\begin{proof}[Proof of Theorem~\ref{Teo56Denso}]
Let us suppose first that $G$ has fewer than five vertices. The cases $\chi(G) \in \{1,2\}$ are trivial. If $\chi(G)=3$, $G$ must contain a triangle, so $K_3 \subseteq G$. And if $\chi(G)=4$, it is easy to check that the only option is $G=K_4$. So, we can assume $|V(G)|\geq5$.

Let $c$ be a coloring of $V(G)$ with minimum number of colors and let $k=\chi(G)$. Note that $|c_i| \leq 3$, for $1 \leq i \leq k$, since there cannot be independent sets of size four. This, because if there were any, then, adding any other vertex, we would have a set of five vertices inducing less than six edges.

Observe that if $c_i = \{u,x\}$ and $c_j = \{v,y\}$ are such that $c_{ij}$ is not connected, then the vertices in $c_i \cup c_j$ are adjacent to all other vertices. Indeed, if $c_{ij}$ is not connected, it has exactly two edges. Then, any other vertex must be adjacent to $u$, $v$, $x$ and $y$, because of the ($5$,$6$)-density of $G$. By symmetry, there are two cases. 
\begin{itemize}
\item $uv, xy \in E(c_{ij})$, in which case every vertex in $c_i \cup c_j$ is a dominating vertex for its color.
\item $uv, vx \in E(c_{ij})$, in which case $v$ has to be the dominating vertex for color $j$, and both $u$ and $x$ are dominating vertices for color $i$.
\end{itemize}  

We choose a dominating vertex $u_t$ of each color $t$ as the corner vertices of the immersion with the extra requirement that if $i \neq j$ and $|c_i|=|c_j|=2$ with $c_{ij}$ disconnected, then we choose a pair of adjacent dominating vertices as corner vertices. Note that this choice is possible because of the above observation. Let $i,j$ be any two colors and we will show that $u_i,u_j$ are connected by a chain.

\begin{itemize}

\item If $|c_i|=1$, then $u_iu_j \in E(G)$, as $u_j$ is dominating. The edge $u_iu_j$ is the chain we want.

\item If $|c_i|=2, |c_j|=3$, then $u_iu_j \in E(G)$, due to the $(5,6)$-density of the graph.

\item If $|c_i|=3, |c_j|=3$, then considering $c_i$ plus $u_j$ and a vertex in $c_j \setminus \{u_j\}$, it holds that the induced subgraph must necesarily be a complete bipartite graph, because of the $(5,6)$-density of $G$. Then, $u_iu_j \in E(G)$.

\item If $|c_i|=2, |c_j|=2$, there are two cases. If $c_{ij}$ is connected, we can always find a chain between $u_i$ and $u_j$. And if $c_{ij}$ is not connected, then by the choice of $u_i,u_j$, it holds that $u_iu_j \in E(G)$. 
	
\end{itemize}
By symmetry, the above are all possible cases, and so, between each pair of corner vertices there is a chain that connects them, and therefore, we have found an immersion of $K_{\chi(G)}$. None of the chains we described uses another corner vertex as an internal vertex, so the immersion is strong.
\end{proof}

Let us prove now Theorem~\ref{TeoC4Inducido}.

\begin{proof}[Proof of Theorem~\ref{TeoC4Inducido}]
Let $c$ be a vertex coloring of $G$ with minimum number of colors, and choose a dominating vertex of each color as the set of corner vertices. Consider two corner vertices, $u$ and $v$, with $c(u)=i$, $c(v)=j$ and let us see that there is a chain that joins them (so we ensure that paths will be edge-disjoint).

If $uv \in E(G)$, then the edge $uv$ is the chain we want. If $uv \notin E(G)$, there are vertices $x \in c_j, y \in c_i$, such that $ux, vy \in E(G)$, because $u$ and $v$ are dominating. Also, as $C_4$ is not an induced subgraph of $\overline{G}$, necessarily $xy \in E(G)$. Thus $uxyv$ is the chain we want. Then we have an immersion of $K_{\chi(G)}$, which is strong since the paths being chains, they do not use another corner vertex as an internal vertex.
\end{proof}

\begin{obs}
At first, the condition that there are no induced cycles of length four in the complement of the graph might seem too restrictive, however, unlike in $(5,6)$-dense graphs, color classes can be arbitrarily big. Indeed, consider the graph obtained from $K_{2,n-2}$ by adding the edge between the two vertices in the class of size two. This graph has no induced cycle of length four in the complement, but any coloring with minimum number of colors contains a class of size $n-2$.
\end{obs}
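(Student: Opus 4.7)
The plan is to verify the two assertions of the observation for the graph $G$ obtained from $K_{2,n-2}$ by adding an edge between the two vertices $a,b$ of the size-$2$ class. Let $v_1,\ldots,v_{n-2}$ denote the vertices of the other class. First I would record the structure of $G$: $ab\in E(G)$, both $a$ and $b$ are adjacent to every $v_i$, and $\{v_1,\ldots,v_{n-2}\}$ is independent.

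To verify that $\overline{G}$ contains no induced $C_4$, I would describe $\overline{G}$ explicitly. In the complement, each pair $\{a,v_i\}$ and $\{b,v_i\}$ becomes a non-edge, the pair $\{a,b\}$ becomes a non-edge, and the previously independent set $\{v_1,\ldots,v_{n-2}\}$ becomes a clique. Hence $\overline{G}$ is the disjoint union of the complete graph on $\{v_1,\ldots,v_{n-2}\}$ together with two isolated vertices $a$ and $b$. Since every vertex of an induced $C_4$ has degree exactly $2$ in that induced subgraph, no vertex that is isolated in the ambient graph $\overline{G}$ can occur in such a $C_4$; any candidate $C_4$ would therefore have to lie entirely inside $\{v_1,\ldots,v_{n-2}\}$, but four vertices chosen there induce $K_4$, not $C_4$. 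This rules out any induced $C_4$ in $\overline{G}$.

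For the coloring claim, I would first argue that $\chi(G)=3$: the edge $ab$ forces $c(a)\neq c(b)$, and each $v_i$ is adjacent to both $a$ and $b$, so its color must differ from both $c(a)$ and $c(b)$, giving $\chi(G)\geq 3$; conversely, assigning one color to $a$, a second to $b$, and a third to all $v_i$ is a proper $3$-coloring. Now in any minimum (i.e.\ $3$-)coloring, the colors $c(a),c(b)$ occupy two of the three available colors, so the single remaining color is the only admissible choice for each $v_i$. Therefore that color class contains all of $v_1,\ldots,v_{n-2}$, yielding a color class of size exactly $n-2$.

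The verification is essentially a direct structural computation; there is no genuine obstacle. The one step that carries the whole argument is identifying $\overline{G}$ as $K_{n-2}$ plus two isolated vertices, after which both the absence of induced $C_4$ in $\overline{G}$ and the forced size of a color class in any minimum coloring of $G$ follow immediately.
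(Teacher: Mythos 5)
Your verification is correct and is essentially the direct structural check that the paper leaves implicit: the complement decomposes as $K_{n-2}$ plus two isolated vertices, which cannot contain an induced $C_4$, and the adjacency constraints in a $3$-coloring force all of $v_1,\ldots,v_{n-2}$ into the single remaining color class. The paper states this as an unproved observation, and your argument supplies exactly the missing (easy) details, with the same underlying structure.

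One tiny point worth noting: you could also observe directly that any induced $C_4$ must in particular have all four of its vertices in a common connected component, which disposes of $a$ and $b$ immediately without appealing to the degree condition, though both routes are equally immediate here.
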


\section{Graphs with small independence number} \label{SeccionAlpha}

Here we study the class of graphs that have no independent set of size three. It is easy to check that the non-neighbourhood of any vertex of a graph $G$ with $\alpha(G) \leq 2$ induces a complete graph.

We shall now see that if we replaced $K_{\lceil \frac{n}{2} \rceil}$ with $K_{\lceil \frac{n}{3} \rceil}$ in Conjecture~\ref{conj3}, then the statement is true, as claimed in Theorem~\ref{TeoEneTercios}. Moreover, either $G$ contains $K_{\lceil \frac{n}{3} \rceil}$ as a subgraph, or any set of $\lceil \frac{n}{3} \rceil$ vertices can be a set of corner vertices. Also, the immersion is strong. 

From now on we will use the following notation: $$\overline{N}(v) = V(G) \setminus \left(N(v) \cup \{v\}\right).$$

\begin{proof}[Proof of Theorem~\ref{TeoEneTercios}]

Let us define, for a vertex $v \in V(G)$ and a set $U \subseteq V(G)$, $$N_U(v)=N(v) \cap U$$ $$\overline{N}_U(v)=\overline{N}(v) \cap U.$$

If there was a vertex $v$ with $d(v)< \lfloor\frac{2n}{3}\rfloor$, then the non-neighborhood of $v$ would have size at least $\lceil\frac{n}{3}\rceil$, and as it induces a complete graph, we would have $G$ containing $K_{\lceil\frac{n}{3}\rceil}$ as a subgraph. Therefor, we can assume $\delta(G) \geq \lfloor \frac{2n}{3} \rfloor$. 

We will find an immersion of $K_{\left\lceil\frac{n}{3}\right\rceil}$ in $G$. We partition $V(G)$ into two disjoint sets $U$ and $W$, such that $|U| = \lceil\frac{n}{3}\rceil$ and $|W| = \lfloor\frac{2n}{3}\rfloor$. The vertices from the set $U$ will be the corner vertices and we denote $P_{uv}$ the path between $u$ and $v$ in the immersion, which will be constructed as follows. We arrange the pairs $\{u,v\}$ with $u,v \in U$ arbitrarily and we assign the paths of the immersion in the following way. If $uv \in E(G)$, then $P_{uv} = uv$. If $uv \notin E(G)$, then we will choose a vertex $z \in N_W(u) \cap N_W(v)$ such that $z$ has not been used at some $P_{ux}$, with $x \in U$ or some $P_{vx}$, with $x \in U$, and we will assign $P_{uv} = uzv$. Note that given the latter condition, the paths will be edge-disjoint. Furthermore, no corner vertex is used as an internal vertex of a path, so the immersion is indeed strong. Let us see that this assignment is possible (we only need to verify this for the case $uv \notin E(G)$). 

Let $uv \not\in E(G)$. We must prove that $u$ and $v$ have enough common vertices in $W$. That is to say, we need to prove the following.

$$|N_W(u)\cap N_W(v)| \geq |\overline{N}_U(u)| + |\overline{N}_U(v)| - 1$$

The term $-1$ is there because the non-existing edge $uv$ is being counted twice.

\begin{eqnarray*}
|\overline{N}_U(u)| + |\overline{N}_U(v)| - 1 & = & |U| - |\{u\}| - |N_U(u)| + |U| - |\{v\}| - |N_U(v)| - 1\\
                     & = & \left\lceil\frac{n}{3}\right\rceil - 1 - |N_U(u)| + \left\lceil\frac{n}{3}\right\rceil - 1 - |N_U(v)| - 1\\
                     & = & 2\left\lceil\frac{n}{3}\right\rceil - 3 - \left( |N(u)| - |N_W(u)| + |N(v)| - |N_W(v)| \right) \\
                     & = & 2\left\lceil\frac{n}{3}\right\rceil - 3 + |N_W(u)| + |N_W(v)| - \left( |N(u)| + |N(v)| \right) \\
                     & \leq &  2\left\lceil\frac{n}{3}\right\rceil - 3 + |N_W(u)| + |N_W(v)| - \left( \left\lfloor\frac{2n}{3}\right\rfloor + \left\lfloor\frac{2n}{3}\right\rfloor \right) \\
                     & = & 2\left\lceil\frac{n}{3}\right\rceil - 3 + |N_W(u)\cup N_W(v)| + |N_W(u) \cap N_W(v)| - 2\left\lfloor\frac{2n}{3}\right\rfloor \\
\end{eqnarray*}

Since $uv \notin E(G)$ and $\alpha(G)\leq2$, we have that for each $w \in W, uw \in E(G)$ or $vw \in E(G)$. This implies that $N_W(u) \cup N_W(v) = W$. Then,

\begin{eqnarray*}
|\overline{N}_U(u)| + |\overline{N}_U(v)| - 1 & \leq & 2\left\lceil\frac{n}{3}\right\rceil - 3 + |W| + |N_W(u) \cap N_W(v)| - 2\left\lfloor\frac{2n}{3}\right\rfloor \\
                     & = & 2\left\lceil\frac{n}{3}\right\rceil - 3 + \left\lfloor\frac{2n}{3}\right\rfloor + |N_W(u) \cap N_W(v)| - 2\left\lfloor\frac{2n}{3}\right\rfloor \\
                     & = & 2\left\lceil\frac{n}{3}\right\rceil - 3 - \left\lfloor\frac{2n}{3}\right\rfloor + |N_W(u) \cap N_W(v)| \\
\end{eqnarray*}

So, we only need to prove that $2\left\lceil\frac{n}{3}\right\rceil - 3 - \left\lfloor\frac{2n}{3}\right\rfloor \leq 0$. 

\begin{eqnarray*}
2\left\lceil\frac{n}{3}\right\rceil - 3 - \left\lfloor\frac{2n}{3}\right\rfloor & \leq & 2\left\lfloor\frac{n}{3}+1\right\rfloor - 3 - \left\lfloor\frac{2n}{3}\right\rfloor \\
                     & = & 2\left\lfloor\frac{n}{3}\right\rfloor + 2 - 3 - \left\lfloor\frac{2n}{3}\right\rfloor \\
                     & \leq & \left\lfloor\frac{2n}{3}\right\rfloor - 1 - \left\lfloor\frac{2n}{3}\right\rfloor \\
                     & = & - 1 \\
                     & \leq & 0
\end{eqnarray*}

That is, less vertices are needed than those that are available, to construct the edge-disjoint paths of the immersion. Therefore, there exists $z \in N_W(u) \cap N_W(v)$ that has not been used in other paths $P_{ux}$ or $P_{vx}$, and then we can assign $P_{uv} = uzv$. Thus, we have obtained a strong immersion of $K_{\lceil\frac{n}{3}\rceil}$ in $G$.

\end{proof}

\subsection{Equivalence of Conjectures \ref{conj2} and \ref{conj3}} \label{SeccionEquivalenciaConjeturas}

The proof of Theorem~\ref{TeoConjeturasEquivalentes} is strongly inspired from \cite{OnASpecialCaseHadwiger}. We will need to use some preliminary results. Suppose Conjecture~\ref{conj2} fails, and let $G$ be a counterexample that minimizes the number of vertices. Observe that the number of vertices is upper bounded by the product of the independence number and the chromatic number. So we have the following inequality: 

\begin{equation} \label{eq2}
|V(G)|\leq2\chi(G).
\end{equation}

We will prove some properties that $G$ satisfies.

\begin{defn}
A graph $G$ is \textit{$k$-color-critical} if $\chi(G)=k$ and $\chi(G-v)<k$, for each $v \in V(G)$.
\end{defn} 

\begin{lema} \label{critic}
$G$ is $\chi(G)$-color-critical.

\begin{proof}
Indeed, if there is a vertex $v \in V(G)$, such that $\chi(G-v)=\chi(G)$, then as $G-v$ has less vertices than $G$ and $\alpha(G-v) \leq 2$, we would have that, $$K_{\chi(G)} = K_{\chi(G-v)} \imm G-v \imm G,$$ which contradicts the fact that $G$ is a counterexample for Conjecture~\ref{conj2}.
\end{proof}
\end{lema}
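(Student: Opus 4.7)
The plan is to argue by contradiction, exploiting the minimality of $G$ as a counterexample to Conjecture~\ref{conj2}. Suppose some vertex $v \in V(G)$ satisfies $\chi(G-v) = \chi(G)$; I would aim to derive that $G$ cannot in fact be a counterexample.

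First, I would note that $\alpha(G-v) \leq \alpha(G) \leq 2$, since deleting a vertex cannot create a larger independent set. Second, $G-v$ has strictly fewer vertices than $G$. Combined with the assumption $\chi(G-v)=\chi(G)$, the minimality of $G$ among counterexamples forces Conjecture~\ref{conj2} to hold for $G-v$, so that graph must contain an immersion of $K_{\chi(G-v)} = K_{\chi(G)}$.

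Finally, I would invoke monotonicity of $\imm$ under subgraph inclusion: if $K_{\chi(G)} \imm G-v$, then certainly $K_{\chi(G)} \imm G$, since any system of edge-disjoint paths witnessing the immersion in $G-v$ also lies inside $G$. This contradicts the choice of $G$. Hence no such $v$ exists, and $\chi(G-v) < \chi(G)$ for every $v \in V(G)$, which is precisely the definition of $G$ being $\chi(G)$-color-critical. There is no real obstacle in this argument; it is a direct unwinding of the minimality hypothesis together with the two monotonicity facts (of $\alpha$ and of immersion containment) under vertex deletion.
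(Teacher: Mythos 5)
Your proposal is correct and matches the paper's proof exactly: both argue by contradiction, using that $\alpha(G-v)\leq 2$, $|V(G-v)|<|V(G)|$, and the minimality of $G$ to conclude $K_{\chi(G)}\imm G-v\imm G$. The only difference is that you spell out the two monotonicity facts (of $\alpha$ and of $\imm$ under vertex deletion), which the paper leaves implicit.
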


\begin{lema} \label{compCon}
$\overline{G}$ is connected.

\begin{proof}
If not, $G$ consists of two disjoint subgraphs $G_1$ and $G_2$, such that for all $u \in V(G_1)$ and for all $v \in V(G_2)$, $uv \in E(G)$. Then, as both $G_1$ and $G_2$ have less vertices than $G$, it holds that $K_{\chi(G_1)} \imm G_1$ and $K_{\chi(G_2)} \imm G_2$, and then, $$K_{\chi(G)} = K_{\chi(G_1)+\chi(G_2)} \imm G,$$ which leads to a contradiction. 
\end{proof}
\end{lema}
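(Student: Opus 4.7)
The plan is a proof by contradiction exploiting the minimality of $G$. Suppose $\overline{G}$ is disconnected; then $V(G)$ partitions into non-empty sets $V_1, V_2$ with no edges of $\overline{G}$ between them, which in $G$ means that every vertex of $V_1$ is adjacent to every vertex of $V_2$. Writing $G_i = G[V_i]$, this says $G$ is the join $G_1 + G_2$. First I would collect three easy facts: $\alpha(G_i) \leq \alpha(G) \leq 2$, since $G_i$ is an induced subgraph; $|V(G_i)| < |V(G)|$ because both parts are non-empty; and $\chi(G) = \chi(G_1) + \chi(G_2)$, the last holding because any proper coloring of a join must use disjoint color palettes on the two sides (by total adjacency), while combining optimal colorings of the $G_i$ with disjoint sets of colors gives the matching upper bound.

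From the first two facts together with the minimality of $G$, each $G_i$ contains an immersion of $K_{\chi(G_i)}$; let $\phi_i$ be the corresponding injection into $V_i$, with edge-disjoint paths $\{P_e^{(i)}\}$ lying inside $G_i$. I would then glue these together into an immersion of $K_{\chi(G)}$ in $G$. The corner vertices are the disjoint union $\phi_1(V(K_{\chi(G_1)})) \cup \phi_2(V(K_{\chi(G_2)}))$, whose size is $\chi(G_1) + \chi(G_2) = \chi(G)$. For a pair of corner vertices on the same side, reuse the path supplied by the $G_i$-immersion; for a pair on opposite sides, use the single join edge connecting them directly.

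Edge-disjointness is immediate: the paths inside $G_1$ and the paths inside $G_2$ live on disjoint vertex sets, and the cross-edges between $V_1$ and $V_2$ belong to neither $G_1$ nor $G_2$, with each such edge used for at most one cross-side pair. This produces $K_{\chi(G)} \imm G$, contradicting the choice of $G$ as a counterexample to Conjecture~\ref{conj2}. The only step that requires any genuine care is the additivity $\chi(G_1 + G_2) = \chi(G_1) + \chi(G_2)$, which is standard but worth spelling out; no appeal to the strong-immersion refinement or to extra structure of graphs with $\alpha \leq 2$ is needed beyond the minimality already exploited in Lemma~\ref{critic}.
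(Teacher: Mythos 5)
Your argument is the same one the paper gives: $\overline{G}$ disconnected means $G$ is a join $G_1 + G_2$, minimality yields immersions of $K_{\chi(G_i)}$ inside each $G_i$, and these glue into $K_{\chi(G_1)+\chi(G_2)} = K_{\chi(G)}$ using the cross-join edges. You simply spell out the details (additivity of $\chi$ on joins, inheritance of $\alpha \leq 2$, and the edge-disjointness of the glued paths) that the paper leaves implicit.
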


For the next property, we will use the next result.


\begin{teo}[\cite{CriticalGraphsConnectedComplements}] \label{teoUtil}
If $x$ is any vertex of a $k$-color-critical graph $G$ such that $\overline{G}$ is connected, then $G-x$ has a $(k-1)$-coloring in which every color class contains at least $2$ vertices.
\end{teo}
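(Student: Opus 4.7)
The plan is to argue by contradiction: assume every $(k-1)$-coloring of $G-x$ contains a color class of size $1$, and choose such a coloring $c$ minimizing the number of singleton color classes. Let $\{y\}$ be one of its singletons, say of color $1$, and call the remaining color classes $V_2,\dots,V_{k-1}$. Two standard reductions come first. (i) $y$ has a neighbor in every $V_i$, $i\neq 1$: otherwise recoloring $y$ with a color absent from its neighborhood would yield a $(k-2)$-coloring of $G-x$, contradicting $\chi(G-x)=k-1$. (ii) $xy \in E(G)$: otherwise extending $c$ by $c(x)=1$ would give a $(k-1)$-coloring of $G$, contradicting $\chi(G)=k$.

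The engine of the argument is a Kempe chain analysis. Since $V_1=\{y\}$, the connected component of the two-color subgraph $c_{1i}$ containing $y$ is exactly $\{y\}\cup N_i(y)$, where $N_i(y)=N(y)\cap V_i$. Swapping colors $1$ and $i$ on this component produces a new $(k-1)$-coloring $c^{(i)}$ whose color-$1$ class is $N_i(y)$ and whose color-$i$ class is $(V_i\setminus N_i(y))\cup\{y\}$. A direct bookkeeping of class sizes shows that the number of singleton color classes strictly decreases under this swap precisely when $|N_i(y)|\geq 2$ and $V_i\not\subseteq N(y)$. By minimality of $c$, for every $i\neq 1$ we must therefore have $|N_i(y)|=1$ or $V_i\subseteq N(y)$.

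If the second alternative held for \emph{every} $i\neq 1$, then $y$ would be adjacent to every vertex of $G-x$ and, by (ii), to $x$ as well, making $y$ isolated in $\overline{G}$ and contradicting its connectedness. Hence there is some $j\neq 1$ with $V_j\not\subseteq N(y)$, and for this $j$ the constraint forces $N_j(y)=\{w\}$ for a single vertex $w$. The swap on $\{y,w\}$ merely moves the singleton from $\{y\}$ to $\{w\}$, producing a new coloring in which the same analysis now applies to $w$. The main obstacle---and the point at which the global connectivity of $\overline{G}$ must be exploited beyond merely producing one non-neighbor of $y$---is to show that this "singleton-shifting" process cannot continue indefinitely. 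The plan would be to iterate the swaps along a walk through non-neighbors of the current singleton (such a walk exists by the connectedness of $\overline{G}$) and argue, via a suitable monovariant or by combining two successive swaps across a non-edge, that eventually one either unlocks a color $i$ for which both $|N_i(\cdot)|\geq 2$ and $V_i\not\subseteq N(\cdot)$ hold simultaneously (yielding a genuine reduction in the number of singletons) or else extends the coloring to all of $G$ with only $k-1$ colors. The delicate bookkeeping of class sizes under iterated Kempe swaps is the heart of the matter.
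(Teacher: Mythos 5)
The paper does not prove this statement: it is quoted verbatim from the reference \cite{CriticalGraphsConnectedComplements} (Stehl\'{i}k, \emph{Critical graphs with connected complements}, JCTB 2003, which sharpens a classical bound of Gallai). So there is no ``paper proof'' to compare against; what you have offered is an independent attempt, and it contains a genuine, self-acknowledged gap.

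Your setup is sound as far as it goes. Choosing a $(k-1)$-coloring of $G-x$ with the fewest singletons, observing that a singleton $\{y\}$ must see every other color and must be adjacent to $x$, identifying the $c_{1i}$-component of $y$ as $\{y\}\cup N_i(y)$, and the class-size bookkeeping showing the swap strictly reduces singletons exactly when $|N_i(y)|\ge 2$ and $V_i\not\subseteq N(y)$ --- all of this is correct. The conclusion that for every $i$ either $|N_i(y)|=1$ or $V_i\subseteq N(y)$, and that connectedness of $\overline G$ rules out the second alternative holding universally, is also correct. But the place you stop is precisely where the theorem lives. Note that the one swap you can always perform (along the unique $w\in N_j(y)$ with $V_j\not\subseteq N(y)$) is its own inverse in the relevant sense: in the new coloring $w$'s only color-$j$ neighbor is $y$, so swapping back is the canonical ``allowed'' move, and nothing you have said prevents the process from cycling between $y$ and $w$ (or among a small set of vertices) forever while staying at the same singleton count. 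You propose ``a suitable monovariant or combining two successive swaps across a non-edge,'' but no such monovariant is exhibited, and it is not clear one exists at this level of generality: the difficulty is that the set of vertices reachable as a singleton need not sweep across a $\overline G$-connected region in any controlled way. The published proof does not proceed by iterated Kempe swaps; it works globally with the structure of the low-vertex subgraph (Gallai trees) of a $k$-critical graph. As written, your argument establishes a nontrivial local constraint on a minimal coloring, but it does not prove the theorem.
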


\begin{lema} \label{numvert}
$|V(G)|=2\chi(G)-1$.  

\begin{proof}
By Lemmas \ref{critic}, \ref{compCon} and Theorem \ref{teoUtil} we know that $G-v$ has a $(\chi(G)-1)$-coloring such that each color class contains at least two vertices. Since $\alpha(G) \leq 2$, each color class in that coloring has size exactly two. So, $|V(G)|=2\chi(G)-1$.  

\end{proof}
\end{lema}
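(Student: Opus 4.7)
The plan is to squeeze the stated equality out of Theorem~\ref{teoUtil} combined with the standing hypothesis $\alpha(G) \leq 2$. The inequality~(\ref{eq2}) already gives $|V(G)| \leq 2\chi(G)$, so the real task is just to sharpen this bound by one using the structural information recorded in the two previous lemmas.

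First I would fix an arbitrary vertex $v \in V(G)$. By Lemma~\ref{critic}, $G$ is $\chi(G)$-color-critical, and by Lemma~\ref{compCon}, $\overline{G}$ is connected. These are precisely the hypotheses of Theorem~\ref{teoUtil} with $k=\chi(G)$, which therefore yields a proper $(\chi(G)-1)$-coloring of $G-v$ in which every color class contains at least two vertices. Now I would invoke $\alpha(G) \leq 2$: each color class of this coloring is an independent set of $G-v$, hence of $G$, so it has at most two vertices. Combining the upper and lower bounds, every color class has size exactly $2$, and summing over the $\chi(G)-1$ classes gives $|V(G-v)| = 2(\chi(G)-1)$; putting $v$ back yields $|V(G)| = 2\chi(G)-1$, as required.

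There is essentially no hard step here, since all the substantive work has been delegated to Theorem~\ref{teoUtil}. The only thing to check is that Lemmas~\ref{critic} and~\ref{compCon} supply exactly the hypotheses needed to invoke that theorem, which they do; no immersion-theoretic reasoning or further use of the counterexample hypothesis is needed at this stage.
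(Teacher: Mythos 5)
Your proof is correct and follows exactly the same route as the paper's: invoke Lemmas~\ref{critic} and~\ref{compCon} to meet the hypotheses of Theorem~\ref{teoUtil}, then use $\alpha(G)\leq 2$ to force every color class of the resulting $(\chi(G)-1)$-coloring of $G-v$ to have size exactly $2$, and count. Your write-up is slightly more explicit about the arithmetic step, but the argument is the same.
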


We are now able to prove Theorem~\ref{TeoConjeturasEquivalentes}.

\begin{proof}[Proof of Theorem~\ref{TeoConjeturasEquivalentes}]
By Lemma~\ref{numvert}, we have that $\left \lceil \frac{|V(G)|}{2} \right \rceil = \frac{|V(G)|+1}{2} = \chi(G)$. Then, $K_{\lceil\frac{n}{2}\rceil} \not\imm G$ and therefore, $G$ is also a counterexample for Conjecture~\ref{conj3}. 
\end{proof}

Observe that $G$ turns out to be a counterexample with minimum number of vertices for Conjecture~\ref{conj3} as well. Indeed, let $H$ be a counterexample of Conjecture~\ref{conj3} such that $|V(H)|<|V(G)|$. Then, $$K_{\frac{|V(H)|}{2}} \not\imm H.$$ And as $|V(H)|\leq 2 \chi(H)$, $$K_{\chi(H)} \not\imm H.$$ So, $H$ is a counterexample of Conjecture~\ref{conj2} and $|V(H)|<|V(G)|$, which is a contradiction.

\subsection{Properties of a minimum counterexample of Conjecture~\ref{conj2}} \label{SeccionContraejemploMinimo}

In this subsection we will prove a series of properties that a counterexample of Conjecture~\ref{conj2} with minimum number of vertices satisfies besides those mentioned by Lemmas~\ref{critic},~\ref{compCon} and~\ref{numvert}. The next result enumerates them.



\begin{teo}
Let $G$ be a counterexample to Conjecture~\ref{conj2} which minimizes the number of vertices. Then the following hold:

\begin{enumerate}
\item $G$ is a counterexample to Conjecture~\ref{conj2} which minimizes the chromatic number. \label{MinCromatico}
\item For every $v \in V(G)$, $\overline{G}-v$ has a perfect matching. \label{match}
\item For every pair of nonadjacent vertices $x$, $y$ of $G$, $|N(x) \cap N(y)| \leq \frac{n-1}{2}$. \label{cod}
\item $\omega(G) \geq \frac{n+1}{4}$. \label{MinOmega}
\item $G$ is connected. \label{MinConnected}
\item $\delta(G) \geq \lceil \frac{n}{2} \rceil$. \label{gradmin} 
\item $G$ is Hamiltonian. \label{ham}
\item For every $v \in V(G)$, $G-v$ has a perfect matching. \label{MinMatching}
\item For every $x, y \in V(G)$, it holds that $d(x,y) \leq 2$. \label{dist}
\item $\chi(G) \geq 8$. \label{MinNumeroCromatico8}
\end{enumerate}

Suppose now that $G$, among all counterexamples of Conjecture~\ref{conj2} minimizing the number of vertices, is one that minimizes the number of edges. Then the next additional property hold:

\newcounter{i}
\setcounter{i}{\value{enumi}}
\begin{enumerate}
\setcounter{enumi}{\value{i}}
\item For every edge $e \in E(G)$, it holds that $\alpha(G-e)>\alpha(G)$. \label{LemaAlphaCritic}
\end{enumerate}

\begin{proof} 
\begin{enumerate}
\item[\ref{MinCromatico}.]
Let $\widetilde{G}$ be any counterexample to Conjecture~\ref{conj2} with minimum chromatic number. Then, 
$$2\chi(G)-1 = |V(G)| \leq |V(\widetilde{G})| \leq 2\chi(\widetilde{G}) \leq 2\chi(G).$$ 
Therefore, $\chi(G)=\chi(\widetilde{G})$.

\item[\ref{match}.]
We know by Theorem~\ref{teoUtil} that $G-v$ has a ($\chi(G)-1$)-coloring, in which every color class has exactly two vertices. This corresponds to a perfect matching in $\overline{G}-v$.

\item[\ref{cod}.]
If this is not so, let $x$ and $y$ be two vertices such that $xy \notin E(G)$ and $|N(x) \cap N(y)| \geq \frac{n-1}{2}+1$. As $G$ is a minimal counterexample for Conjecture~\ref{conj3}, we know that $$K_{\frac{n-1}{2}} = K_{\lceil\frac{n-2}{2}\rceil} \imm G-\{x,y\}.$$ 

Let $U$ be the set of corner vertices of such an immersion and let $W=V(G-\{x,y\})\setminus U$. As $\alpha(G)\leq2$ and $xy \notin E(G)$, we have that for every $u \in U$, $ux \in E(G)$ or $uy \in E(G)$. Without loss of generality, assume that $x$ is adjacent to at least half of the vertices in $U$ (and that $x$ has more neighbors than $y$ in $U$). Note that every non-neighbor of $x$ has to be adjacent to $y$. 

Let us see that $x$ is connected to every vertex $u$ in $U$, by edge-disjoint paths $P_{xu}$. If $xu \in E(G)$, then $P_{xu}=xu$. If $xu \notin E(G)$, then $P_{xu}=xzyu$, with $z \in N_W(x)\cap N_W(y)$. Observe that for this to work, it needs to hold that $|N_W(x) \cap N_W(y)| \geq |\overline{N}_U(x)|$. 

We know that $|\overline{N}_U(x)| \leq \frac{n-1}{4}$, so, $$|\overline{N}_U(x)| = \left\lfloor\frac{n-1}{4}\right\rfloor-i \text{ with } i \in \left\{0,...,\left\lfloor\frac{n-1}{4}\right\rfloor\right\}.$$ Besides, $$|N_W(x) \cap N_W(y)| = |N(x) \cap N(y)| - |N_U(x) \cap N_U(y)| \geq |N(x) \cap N(y)|-(2i+1).$$ The last inequality is obtained by assuming that $|N_U(x)| \geq |N_U(y)|$, so the number of neighbors that $x$ and $y$ share in $U$ is bounded. Indeed, $N_U(y) = \overline{N}_U(x) \cup (N_U(x) \cap N_U(y))$ and as we assumed $|N_U(y)| \leq |N_U(x)|$, we have that $$|\overline{N}_U(x)| + |N_U(x) \cap N_U(y)| \leq |N_U(x)|.$$ Then,
\begin{eqnarray*}
|N_U(x) \cap N_U(y)| & \leq & |N_U(x)| - |\overline{N}_U(x)| \\
                     & = & \left(\left\lceil\frac{n-1}{4}\right\rceil + i\right) - \left(\left\lfloor\frac{n-1}{4}\right\rfloor - i\right) \\
                     & \leq & 2i+1.    
\end{eqnarray*}

And as $|N(x) \cap N(y)| \geq \frac{n-1}{2}+1$, we have that $$|\overline{N}_U(x)| = \left\lfloor\frac{n-1}{4}\right\rfloor-i \leq |N(x) \cap N(y)|-(2i+1) \leq |N_W(x) \cap N_W(y)|.$$ 

It is important to notice that the paths $P_{xu}$, from $x$ to $u \in U$, do not interfere with the already existing paths between corner vertices in $U$. This is so, because the new paths only use edges which are incident to $x$ and $y$. Therefore, we get an immersion of $K_{\left\lceil\frac{n-2}{2}\right\rceil+1} = K_{\left\lceil\frac{n}{2}\right\rceil}$ in $G$, which is a contradiction.

\item[\ref{MinOmega}.]
Let $x$, $y$ be any two vertices such that $xy \notin E(G)$ and divide the rest of the vertices into $A = N(x) \setminus N(y)$, $B = N(x) \cap N(y)$ and $C = N(y) \setminus N(x)$. Observe that both $A$ and $C$ induce a complete graph because $\alpha(G)\leq2$. Besides, by property~\ref{cod}, it holds that $|B| \leq \frac{n-1}{2}$. Therefore, at least one of the other two sets, say $A$, satisfies that $|A \cup \{x\}| \geq \frac{n+1}{4}$. And as $\omega(G) \geq |A \cup \{x\}|$, we are done. 

\item[\ref{MinConnected}.]
Indeed, if not so, $G$ would have at least two connected components. In fact, since $\alpha(G) \leq 2$, it would have exactly two connected components and every component would be a complete graph. Then, $K_{\chi(G)} \subseteq G$, which contradicts that $G$ is a counterexample for Conjecture~\ref{conj2}.

\item[\ref{gradmin}.]
Observe first that it is straightforward to prove that $\delta(G) \geq \lfloor\frac{n}{2}\rfloor$, since the non-neighborhood of any vertex induces a complete graph. Indeed, if $\delta(G) < \lfloor\frac{n}{2}\rfloor$, $K_{\lceil \frac{n}{2} \rceil}$ would be a subgraph of $G$, a contradiction.

So suppose that $\delta(G) = \lfloor \frac{n}{2} \rfloor$ and let $v$ be such that $d(v)=\delta(G)$. Divide $V(G)-{v}$ into the neighbors and the non-neighbors of $v$, $N(v)$ and $\overline{N}(v)$ respectively. We know, by property~\ref{match}, that $\overline{G}-v$ has a perfect matching. And given that $\overline{N}(v)$ induces a complete graph, every vertex in $\overline{N}(v)$ is matched with a vertex in $N(v)$. Besides, as $|N(v)|=\lfloor \frac{n}{2} \rfloor$, then $|\overline{N}(v)|=\lfloor \frac{n}{2} \rfloor$. This matching represents a coloring of $G$, in which all color classes have exactly two vertices. We claim that $K_{\chi(G)} \imm G$, where the corner vertices are $\{v\} \cup N(v)$.

Indeed, $vu \in E(G)$, for every $u \in N(v)$. Then, we can assign $P_{vu}=vu$. Consider now $u, w \in N(v)$. If $uw \in E(G)$, then $P_{uw}=uw$. If $uw \notin E(G)$, then, as $\alpha(G) \leq 2$, it holds that $ux_w, wx_u \in E(G)$, where $x_w, x_u$ are the vertices that are matched with $w$ and with $u$, respectively. Also, $x_ux_w \in E(G)$, since $x_u, x_w \in \overline{N}(v)$, which is a complete graph. Therefore, we can assign $P_{uw}=ux_wx_uw$. The paths are edge-disjoint, because by seeing the matching in $\overline{G}-v$ as a coloring in $G-v$, the chosen paths are precisely chains between corner vertices of different colors.

\item[\ref{ham}.]
It follows from property~\ref{gradmin} along with Dirac's Theorem for Hamiltonian graphs \cite{DiracTheo}.

\item[\ref{MinMatching}.]
It is implied by property~\ref{ham} and Lemma~\ref{numvert}.

\item[\ref{dist}.]
There are two cases. If $x,y \in E(G)$, then $d(x,y)=1$. If $x,y \notin E(G)$, the by property~\ref{gradmin}, we know that both $x$ and $y$ have at least $\left\lceil\frac{n}{2}\right\rceil$ neighbors into a set of $n-2$ vertices ($V(G)\setminus\{x,y\}$). That means they have at least one common neighbor, so $d(x,y)=2$.

\item[\ref{MinNumeroCromatico8}.]
It follows directly from Theorem~\ref{ALchico}.

\item[\ref{LemaAlphaCritic}.]
If there were an edge $e \in E(G)$, such that $\alpha(G-e) \leq \alpha(G) = 2$, then, $$K_{\chi(G-e)} \imm G-e \imm G.$$ So, $\chi(G-e) \leq \chi(G)-1$. Therefore, $G-e$ has $|V(G)|=2\chi(G)-1$ vertices and can be colored with $\chi(G)-1$ colors. Necessarily one color class has at least 3 vertices, which is a contradiction.

\end{enumerate}
\end{proof}
\end{teo}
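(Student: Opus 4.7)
The plan is to establish each of the eleven properties by exploiting the minimality of $G$ together with the equivalence of Conjectures~\ref{conj2} and~\ref{conj3} already proved in Theorem~\ref{TeoConjeturasEquivalentes}. The repeated engine is the following: every proper induced subgraph $H$ of $G$ still has $\alpha(H)\le 2$, so by minimality $H$ contains $K_{\chi(H)}$ as an immersion, which via Lemma~\ref{numvert} and the equivalence yields an immersion of $K_{\lceil |V(H)|/2\rceil}$. Most items then reduce to packaging such a sub-immersion and extending it by one or two corner vertices.

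For the quick properties I would handle items \ref{MinCromatico}, \ref{match}, \ref{MinConnected} and \ref{MinNumeroCromatico8} first. Item \ref{MinCromatico} is a direct size comparison: any counterexample $\widetilde G$ satisfies $|V(\widetilde G)|\le 2\chi(\widetilde G)$ and $|V(\widetilde G)|\ge|V(G)|=2\chi(G)-1$, forcing $\chi(\widetilde G)\ge\chi(G)$. Item \ref{match} is immediate from Theorem~\ref{teoUtil}, as each color class of size $2$ in a proper $(\chi(G)-1)$-coloring of $G-v$ encodes an edge of $\overline{G}-v$. Item \ref{MinConnected} uses that two components with $\alpha(G)\le 2$ would each be a clique, giving $K_{\chi(G)}\subseteq G$. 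Item \ref{MinNumeroCromatico8} follows from Theorem~\ref{ALchico} applied to a $\chi(G)$-critical subgraph.

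The technical heart is item \ref{cod}. I would proceed by contradiction: if $|N(x)\cap N(y)|>\frac{n-1}{2}$ for some nonadjacent pair $x,y$, I apply minimality inside $G-\{x,y\}$ to obtain a strong immersion of $K_{(n-1)/2}$ with corner set $U$ and leftover set $W$, then promote $x$ to a new corner. For each $u\in U$ I route $P_{xu}=xu$ when $xu\in E(G)$ and otherwise $P_{xu}=x\,z\,y\,u$ with $z$ a common neighbor chosen from $W$, using $\alpha(G)\le 2$ and $xy\notin E(G)$ to guarantee $yu\in E(G)$ whenever $xu\notin E(G)$. The main obstacle is the bookkeeping: I need $|N_W(x)\cap N_W(y)|\ge|\overline{N}_U(x)|$, which forces the balancing assumption $|N_U(x)|\ge|N_U(y)|$ and a careful bound on $|N_U(x)\cap N_U(y)|$ in terms of $|\overline{N}_U(x)|$. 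That is where the codegree hypothesis gets consumed to yield an immersion of $K_{\lceil n/2 \rceil}$ in $G$, a contradiction.

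The remaining items follow relatively quickly. Item \ref{MinOmega} uses item \ref{cod} by partitioning $V(G)\setminus\{x,y\}$ into $A=N(x)\setminus N(y)$, $B=N(x)\cap N(y)$, $C=N(y)\setminus N(x)$; since $|B|\le\frac{n-1}{2}$ and $A,C$ are cliques, one of $A\cup\{x\}$ or $C\cup\{y\}$ has size at least $\frac{n+1}{4}$. For item \ref{gradmin} I first note $\delta(G)\ge\lfloor n/2\rfloor$ (otherwise the non-neighborhood of a low-degree vertex is already a clique of size $\lceil n/2\rceil$), and in the equality case I build an immersion on corner set $\{v\}\cup N(v)$ by routing nonadjacent $u,w\in N(v)$ through the pair matched to them in $\overline{N}(v)$, which is itself a clique. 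Item \ref{ham} is Dirac's theorem applied to item \ref{gradmin}; item \ref{MinMatching} removes $v$ from a Hamiltonian cycle of length $2\chi(G)-1$ and takes alternating edges of the resulting path on $2\chi(G)-2$ vertices; item \ref{dist} is a short inclusion-exclusion argument using the minimum degree from item \ref{gradmin}. Finally, item \ref{LemaAlphaCritic} uses the extra edge-minimality: if some $e\in E(G)$ had $\alpha(G-e)=2$, then $K_{\chi(G-e)}\imm G-e\imm G$, forcing $\chi(G-e)\le\chi(G)-1$; but a proper $(\chi(G)-1)$-coloring of $G-e$ on $2\chi(G)-1$ vertices must contain a class of size at least $3$, contradicting $\alpha(G-e)\le 2$.
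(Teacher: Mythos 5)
Your proposal is correct and follows essentially the same route as the paper on every item, including the codegree bound as the technical heart, the clique partition for $\omega(G)$, the matching-based routing for $\delta(G)$, and the Dirac/Hamiltonicity chain. The one minor slip is in item~\ref{cod}, where you invoke a \emph{strong} immersion of $K_{(n-1)/2}$ in $G-\{x,y\}$ even though minimality only yields an ordinary immersion; this is harmless, since (as the paper notes) the new paths $P_{xu}$ use only edges incident to $x$ or $y$, which cannot appear in any immersion of $G-\{x,y\}$, strong or not.
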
 

\section{Conclusion}

The question of whether Abu-Khzam and Langston's conjecture is true still remains open, even in the special case of $\alpha(G) \leq 2$. A possible way would be to continue studying a counterexample of Conjecture~\ref{conj2} minimizing the number of vertices. More structural properties can be found in \cite{Memoria}. 

After seeing the proofs of Theorems~\ref{Teo56Denso} and~\ref{TeoC4Inducido} it is tempting to try to look for an immersion of a complete graph with a vertex of every color as the set of corner vertices and chains as paths between them. However there are examples of graphs with colorings in which it is impossible to find this type of immersion (the reader is referred also to \cite{Memoria}).

\section{Acknowledgments}

The author would like to thank Maya Stein for her tremendous help and support during the creation of this article, and also the two anonymous referees for their very valuable comments.

\bibliographystyle{plain}
\bibliography{bibliografia}
\addcontentsline{toc}{section}{\protect\numberline{}{References}}

\end{document}